\documentclass[10pt,french]{smfart}

\usepackage[T1]{fontenc}
\usepackage[english,french]{babel}
\usepackage{latexsym,amscd,color}
\usepackage{amsmath,amsfonts,amssymb,mathrsfs}
\usepackage{enumerate,euscript}
\usepackage{amssymb,url,xspace,smfthm}
\input xy
\xyoption{all}

\DeclareMathOperator{\spm}{Spm}
\DeclareMathOperator{\res}{Res}

\DeclareMathOperator{\gal}{Gal}

\newcommand{\BibTeX}{{\scshape Bib}\kern-.08em\TeX}

\newcommand{\T}{\S\kern .15em\relax }
\newcommand{\AMS}{$\mathcal{A}$\kern-.1667em\lower.5ex\hbox
        {$\mathcal{M}$}\kern-.125em$\mathcal{S}$}
\newcommand{\resp}{\textit{resp}.\xspace}

\DeclareMathOperator{\proj}{Proj}

\DeclareMathOperator{\rg}{rg}

\DeclareMathOperator{\spec}{Spec}

\renewcommand{\P}{\mathbb{P}}

\newcommand{\C}{\mathbb{C}}
\newcommand{\Q}{\mathbb{Q}}

\newcommand{\adeg}{\widehat{\deg}}

\newcommand{\p}{\mathfrak{p}}
\DeclareMathOperator{\sym}{Sym}

\newcommand{\sE}{\mathcal{E}}

\renewcommand{\O}{\mathcal{O}}
\DeclareMathOperator{\Aut}{Aut}

\newcommand{\f}{\mathbb{F}}
\newcommand{\ndot}{\raisebox{.4ex}{.}}




\tolerance 400
\pretolerance 200

\title{Fibres non r\'eduites d'un sch\'ema arithm\'etique}
\alttitle{Non-reduced fibers of an arithmetic scheme}
\date{\today}
\date{\today}
\author{Chunhui Liu}
\address{Institute for Advanced Study in Mathematics\\
Harbin Institute of Technology\\
150001 Harbin\\P. R. China}
\email{chunhui.liu@hit.edu.cn}

\begin{document}
\def\smfbyname{}
\begin{abstract}
Pour un sch\'ema r\'eduit projectif sur l'anneau des entiers d'un corps de nombres, l'ensemble des places au dessus desquelles les fibres du sch\'ema ne sont pas r\'eduite est un ensemble fini. On donne une majoration explicite du produit des normes de ces places. Pour cela, on introduit une g\'en\'eralisation de la notion de hauteur sur l'anneau ad\'elique. En utilisant la th\'eorie des vari\'et\'es de Chow, on ram\`ene le cas g\'en\'eral d'un sch\'ema de dimension pure \`a celui d'une hypersurface et on traite ce dernier \`a l'aide du r\'esultant de l'\'equation de l'hypersurface et des d\'eriv\'ees partielles de cette \'equation.

\end{abstract}

\begin{altabstract}
For a reduced projective scheme over the ring of integers of a number field, the set of places over which the fibres of the scheme are not reduced is a finite set. We give an explicit upper bound for the product of the norms of places in this set. For this purpose, we introduce a generalization of the notion of height over the adelic ring. We reduce the general case of a scheme of pure dimension to the case of a hypersurface by using the theory of Chow varieties. The case of a hypersurface is then treated with the help of the resultant of the equation of the hypersurface with some partial derivatives of the equation.

\end{altabstract}
\maketitle

\tableofcontents
\section{Introduction}
Soit $X\rightarrow\spec\O_K$ un sch\'ema noeth\'erien r\'eduit, o\`u $K$ est un corps de nombres et $\O_K$ est l'anneau des entiers de $K$. On d\'esigne par $\spm \O_K$ l'ensemble des id\'eaux maximaux de l'anneau $\O_K$. Une place $\p\in\spm\O_K$ est appel\'ee \textit{place non r\'eduite} du sch\'ema $X\rightarrow\spec\O_K$ si la fibre $X_{\f_\p}=X\times_{\spec\O_K}\spec\f_\p\rightarrow\spec\f_\p$ n'est pas r\'eduite, o\`u $\f_\p$ est le corps r\'esiduel de $\O_K$ par rapport \`a $\p$. D'apr\`es \cite[Th\'eor\`eme (9.7.7)]{EGAIV_3}, il n'y a qu'un nombre fini d'id\'eaux maximaux $\p\in \spm\O_K$ tels que la fibre $X_{\f_\p}\rightarrow\spec\f_\p$ ne soit pas r\'eduite.

Il est naturel de consid\'erer une description num\'erique des places non r\'eduites. Par exemple, on consid\`ere la majoration du nombre de ces id\'eaux maximaux ou la majoration du produit des normes de ces id\'eaux maximaux.

Ern\'e a consid\'er\'e un sujet similaire. Dans \cite{Erne_hypersurface}, \'etant donn\'ee une hypersurface projective g\'eom\'etriquement int\`egre d'un degr\'e fix\'e, par le th\'eor\`eme arithm\'etique de B\'ezout introduit dans \cite[Theorem 5.4.4, Theorem 5.5.1]{BGS94}, elle \'etudie la majoration du produit des normes de id\'eaux maximaux tels que les fibres contiennent une hypersurface d'un autre degr\'e plus petit fix\'e. Dans \cite{Erne_general}, elle \'etudie le cas de sch\'ema projectif g\'eom\'etriquement int\`egre en utilisant la th\'eorie des vari\'et\'es de Chow.
\subsection{R\'esultat principal}
Dans cet article, pour un sch\'ema projectif r\'eduit sur un corps de nombres arbitraire, on donnera une majoration du produit des normes des id\'eaux maximaux non r\'eduits.
\begin{theo}[Th\'eor\`eme \ref{reduced default}]
  Soit $X$ un sous-sch\'ema ferm\'e r\'eduit de dimension pure $d$ et de degr\'e $\delta$ de $\mathbb P^n_K$, dont l'adh\'erence sch\'ematique dans $\mathbb P^n_{\O_K}$ est $\mathscr X$. On d\'esigne par $\mathcal Q(\mathscr X)$ l'ensemble des places sur lesquelles les fibres de $\mathscr X$ ne sont pas r\'eduites. Alors on a
  \[\frac{1}{[K:\Q]}\sum_{\p\in\mathcal Q(\mathscr X)}\log N(\p)\leqslant(2\delta-1)h_{\overline{\O(1)}}(X)+C_0(d,n,\delta),\]
  o\`u $N(\p)=\#(\O_K/\p)$, et $h_{\overline{\O(1)}}(X)$ est une hauteur de $X$ (voir la d\'efinition \ref{arakelov height of projective variety}). De plus, on explicitera la constant $C_0(d,n,\delta)$ dans le th\'eor\`eme \ref{reduced default}.
\end{theo}
\subsection{M\'ethode}
D'abord on r\'esout ce probl\`eme dans le cas o\`u $X$ est une hypersurface projective (voir le th\'eor\`eme \ref{default reduissant}). On consid\`ere le polyn\^ome homog\`ene qui d\'efinit $X$ comme un polyn\^ome \`a coefficients dans l'anneau ad\'elique $\mathbb A_K$ (voir la remarque \ref{compare adelic and classic height}). Dans ce cas-l\`a, on consid\`ere son polyn\^ome primitif au sens ad\'elique. De plus, on consid\`ere un r\'esultant de ce polyn\^ome primitif, qui est non-nul lorsque l'hypersurface $X$ est r\'eduite. On donnera une majoration des id\'eaux maximaux tels que les r\'eductions du polyn\^ome modulo lequels s'annulent, ce qui donne un contr\^ole des places non r\'eduites.

En suite, on r\'esout le cas o\`u $X$ est un sch\'ema projectif de dimension pure en utilisant la th\'eorie des vari\'et\'es de Chow et des vari\'et\'es de Cayley (voir le th\'eor\`eme \ref{reduced default}). Si un sch\'ema de dimension pure est r\'eduit, toute composante irr\'eductible de sa forme de Chow ou de sa vari\'et\'e de Cayley est de multiplicit\'e $1$ (voir la d\'efinition \ref{fundamental cycle}).

La m\'ethode dans cet article est diff\'erente de celle dans \cite{Erne_hypersurface,Erne_general} et permet d'obtenir des r\'esultats explicits. Compar\'ee aux estimations dans \cite{Erne_hypersurface,Erne_general}, notre majoration donne une meilleure d\'ependance de la hauteur de $X$ et, globalement, de meilleures constantes. Comme on utilise une m\'ethode explicite, il faut utiliser la hauteur classique (voir la d\'efinition \ref{classical height of hypersurface}) directement. Pour le cas g\'en\'eral de dimension pure, on a besoin de comparer certaines hauteurs de $X$.
\subsection{Organisation de l'article}
Cet article est organis\'e comme suivant. Dans la section 2, on rappellera la th\'eorie des vari\'et\'es de Chow et des vari\'et\'es de Cayley, qui est purement g\'eom\'etrique. Dans la section 3, on comparera quelques hauteurs d'un sch\'ema arithm\'etique de dimension pure. Dans la section 4, on construira un r\'esultant particulier, et donnera une majoration des id\'eaux maximaux de $\O_K$ tels que les r\'eductions de ce r\'esultant modulo lesquels  sur laquelles s'annulent. Dans la section 5, on donnera la majoration mentionn\'ee plus haut pour le cas d'une hypersurface dans le th\'eor\`eme \ref{default reduissant}, o\`u l'on consid\`ere le r\'esultant de l'\'equation qui d\'efinit l'hypersurface. Dans la section 6, on traitera le cas d'un sch\'ema de dimension pure g\'en\'eral par la th\'eorie des vari\'et\'es de Chow et des vari\'et\'es de Cayley au th\'eor\`eme \ref{reduced default}.
\subsection*{Remerciements}
Ce travail fait partie de la th\`ese de l'auteur pr\'epar\'ee \`a l'Universit\'e Paris Diderot - Paris 7. L'auteur voudrait remercier profond\'ement ses directeurs de th\`ese Huayi Chen et Marc Hindry pour leurs sugg\`estions autour de ce travail. De plus, l'auteur voudrait remercier le rapporteur anonyme pour sa lecture attentive et ses nombreuses suggestions qui ont permis d'am\'eliorer grandement le pr\'esent texte.
\section{La vari\'et\'e de Chow et la vari\'et\'e de Cayley}\label{chow form and cayley form}

 Dans cette section, on rappellera les notions de vari\'et\'e de Chow et de vari\'et\'e de Cayley d'un sch\'ema projectif de dimension pure. Dans tout l'article, les anneaux consid\'er\'es sont commutatifs et unitaires sauf mention contraire.

\subsection{La formation sur un corps}\label{Cayley form over field}
Pour la construction des vari\'et\'es de Chow et des vari\'et\'es de Cayley sur un corps, on utilise une approche inspir\'ee par \cite[\S3.1]{Chen1}. On renvoie \`a \cite{Gelfandal94} pour une introduction syst\'ematique de cette th\'eorie.

f
Soient $A$ un anneau, et $M$ un $A$-module. On d\'esigne par $\ell_A(M)$ la \textit{longueur} de $M$ comme un $A$-module. On revoie les lecteurs \`a \cite[\S2.4]{GTM150} pour plus de d\'etails.

Soit $m$ un entier positif. On d\'esigne par $\sym_A^m(M)$ le $m$-i\`eme produit sym\'etrique de $M$, ou par $\sym^m(M)$ s'il n'y a pas d'ambigu\"it\'e sur $A$. De plus, on d\'esigne par
\[\sym_A(M):=\bigoplus_{i\in\mathbb N}\sym_A^i(M).\]

La notion introduite au-dessous provient de \cite[\S 1.5]{Fulton}.
\begin{defi}\label{fundamental cycle}
Soient $X$ un sch\'ema noeth\'erien de dimension pure, et $\mathcal C(X)$ l'ensemble des composantes irr\'eductibles de $X$. On d\'efinit le \textit{cycle fondamental} de $X$ comme la somme formelle
\begin{equation*}
  [X]=\sum_{X'\in\mathcal C(X)}\ell_{\O_{X,X'}}(\O_{X,X'})X'.
\end{equation*}
 De plus, l'entier $\ell_{\O_{X,X'}}(\O_{X,X'})$ est appel\'e la \textit{multiplicit\'e} de la composante irr\'eductible $X'\in\mathcal C(X)$ dans $X$.
\end{defi}
Soient $V$ un espace vectoriel de rang fini sur un corps $k$, et $\mathbb P(V)$ l'espace projectif associ\'e \`a $V$. Dans la d\'efinition \ref{fundamental cycle}, si $X$ est un sous-sch\'ema ferm\'e de dimension pure de $\mathbb P(V)$, on d\'efinit le \textit{degr\'e} du cycle $[X]$ comme
 \[\sum\limits_{X'\in\mathcal C(X)}\ell_{\O_{X,X'}}(\O_{X,X'})\deg_{\O_V(1)}(X'),\]
 qui est \'egal \`a $\deg_{\O_V(1)}(X)$. Dans la suite, on d\'esgine par $\deg(X)$ le degr\'e $\deg_{\O_V(1)}(X)$ pour simplifier les notations.

 Maintenant on donne la construction pr\'ecise de la \textit{vari\'et\'e de Cayley} d'un sous-sch\'ema ferm\'e de dimension pure $X$ de $\mathbb P(V)$. Certaines id\'ees proviennent de \cite[\S3.2.B]{Gelfandal94}, o\`u l'on consid\`ere un plongement dans la grassmannienne. La vari\'et\'e de Caylay param\'etrise les sous-sch\'emas lin\'eaires de dimension $n-\dim(X)-1$ de $\mathbb P(V)$ dont l'intersection avec $X$ est non vide.

 Soit $\check{G}=\hbox{Gr}\left(d+1,V^\vee\right)$ la grassmannienne qui classifie les quotients de rang $d+1$ de $V^\vee$ (ou encore les sous-espaces de rang $d+1$ de $V$), o\`u $V^\vee$ est l'espace dual de $V$. Par le plongement de Pl\"{u}cker $\check{G}\rightarrow\mathbb P\left(\bigwedge^{d+1}V^\vee\right)$, l'alg\`ebre de coordonn\'ees $B(\check{G})=\bigoplus\limits_{D\geqslant 0}B_D(\check{G})$ de $\check{G}$ est une alg\`ebre quotient homog\`ene de l'alg\`ebre $\bigoplus\limits_{D\geqslant 0}\sym^D\left(\bigwedge^{d+1}V^\vee\right)$. Pour expliquer le r\^ole de la coordonn\'ee de Pl\"{u}cker, on consid\`ere la construction suivante: on d\'esigne par
\begin{equation}\label{plucker embedding}
  \theta:V^\vee\otimes_k\left(\bigwedge\nolimits^{d+1}V\right)\rightarrow\bigwedge\nolimits^dV
\end{equation}
 l'homomorphisme qui envoie $\xi\otimes(x_0\wedge\cdots\wedge x_d)$ sur
\begin{equation*}
  \sum_{i=0}^d(-1)^i\xi(x_i)x_0\wedge\cdots\wedge x_{i-1}\wedge x_{i+1}\wedge\cdots\wedge x_d.
\end{equation*}
Soit $\widetilde{\Gamma}$ la sous-vari\'et\'e de $\mathbb P(V)\times_k \mathbb P\left(\bigwedge^{d+1}V^\vee\right)$ qui classifie les point $(\xi,\alpha)$ tels que $\theta(\xi\otimes\alpha)=0$.

Soient \[p': \mathbb P(V)\times_k\mathbb P\left(\bigwedge\nolimits^{d+1}V^\vee\right)\rightarrow\mathbb P(V)\] et \[q': \mathbb P(V)\times_k\mathbb P\left(\bigwedge\nolimits^{d+1}V^\vee\right)\rightarrow\mathbb P\left(\bigwedge\nolimits^{d+1}V^\vee\right)\] les deux projections canoniques, et $v:\widetilde{\Gamma}\rightarrow\mathbb P(V)\times_k\mathbb P\left(\bigwedge^{d+1}V^\vee\right)$ le plongement canonique. On d\'efinit \[p=p'\circ v:\widetilde{\Gamma}\rightarrow\mathbb P(V)\hbox{ et } q=q'\circ v:\widetilde{\Gamma}\rightarrow\mathbb P\left(\bigwedge\nolimits^{d+1}V^\vee\right).\] Alors on a un diagram commutatif comme suit
\[\xymatrix{ & \widetilde{\Gamma}\ar@/^2pc/[ddr]^{q}\ar@/_2pc/[ddl]_{p}\ar[d]^v&\\ & \mathbb P(V)\times_k\mathbb P\left(\bigwedge\nolimits^{d+1}V^\vee\right)\ar[dr]^{q'}\ar[dl]_{p'}&\\ \mathbb P(V)\ar[r]& \spec k&\mathbb P\left(\bigwedge\nolimits^{d+1}V^\vee\right).\ar[l]}\]

Avce la construction au-dessus, on a la proposition suivante, qui est une g\'en\'eralisation de \cite[Proposition 3.4]{Chen1}.
 \begin{prop}\label{cayleyform}
  Soit $X$ un sous-sch\'ema ferm\'e de dimension pure de $\mathbb P(V)$, qui est de dimension $d$. On suppose que $[X]=\sum\limits_{i\in I}m_iX_i$ est le cycle fondamental de $X$. Alors $q_*\left(p^{*}[X]\right)$ est un diviseur sur $\mathbb P\left(\bigwedge^{d+1}V^\vee\right)$. De plus, ce diviseur est de la forme de $\sum\limits_{i\in I}m_i\widetilde{X}'_i$, o\`u chaque $\widetilde{X}'_i$ est une hypersurface int\`egre de degr\'e $\deg(X_i)$ de $\mathbb P\left(\bigwedge^{d+1}V^\vee\right)$, et les $\widetilde{X}'_i$ sont distincts.
\end{prop}
 \begin{proof}
D'abord, on consid\`ere le cas o\`u $X$ est un sch\'ema int\`egre. Dans ce cas-l\`a, la d\'emonstration suivante est inspir\'ee par \cite[Proposition 3.3]{Chen1}. La sous-vari\'et\'e d'incidence $\widetilde \Gamma$ est une fibration sur $\mathbb P (V)$. Comme $X$ est int\`egre, l'inverse sch\'ematique $p^{-1}(X)$ est irr\'eductible. On d\'esigne par $Y=p^{-1}(X)$, consid\'er\'e comme un sous-sch\'ema ferm\'e et int\`egre de $\widetilde \Gamma$. La projection $q$ est propre, donc l'image $Z=q(Y)$ est un sous-sch\'ema ferm\'e int\`egre de $\mathbb P\left(\bigwedge^{d+1}V^\vee\right)$.

Soit $\xi=\spec K$ un point ferm\'e arbitraire de $Z$, qui est correspondant au $(d+1)$-i\`eme puissance ext\'erieure d'un sous-espace de rang $d+1$ de $V$. La fibre $Y_\xi$ co\"incide avec le sous-sch\'ema de $X_{K}$ d\'efini par l'annulation sur $V$ au sens de \eqref{plucker embedding} en prolongant $k$ dans $K$. On prend garde que la dimension de $X_K$ est $d$. Donc $q$ envoie $Y$ dans $Z$ birationnellement et on a $\dim(Z)=\dim(Y)=\dim\left(\mathbb P\left(\bigwedge^{d+1}V^\vee\right)\right)-1$.

Afin de calculer le degr\'e de $Z$ dans $\mathbb P\left(\bigwedge^{d+1}V^\vee\right)$, on consid\`ere l'\'egalit\'e de la classe des cycles
\[[Z]=q_*(p^*[X])=\deg(X)\cdot q_*(p^*[U]),\]
o\`u $U$ est l'espace projectif associ\'e \`a un espace quotient de dimension $d+1$ de $V$ arbitraire prolong\'e dans $\mathbb P\left(\bigwedge^{d+1}V^\vee\right)$ par le plongement de Pl\"ucker. On prend garde que $q_*(p^*[U])$ est la classe premi\`ere de Schubert dans $\mathbb P\left(\bigwedge^{d+1}V^\vee\right)$ (cf. \cite[\S14.7]{Fulton}), alors le degr\'e de $Z$ est $\deg(X)$.

 Dans la suite, on consid\`ere le cas de sch\'emas g\'en\'eraux de dimension pure. Si $X$ est un sous-sch\'ema ferm\'e de dimension pure de $\mathbb P(V)$ avec le cycle fondamental dans l'\'enonc\'e, on a
  \[q_*(p^*[X])=\sum_{i\in I}m_iq_*(p^*[X_i]).\]

Soient $X_i$ et $X_j$ deux composantes irr\'eductibles distinctes de $X$, qui sont consid\'er\'ees comme deux sch\'emas int\`egres. Donc il existe un $\overline k$-point $P$ dans $\mathbb P(V)$, tel que $P\in X_i(\overline k)$ mais $P\not\in X_j(\overline k)$. De plus, on obtient qu'il existe un sous-sch\'ema $\overline k$-lin\'eaire ferm\'e qui intersecte $X_i$ en un sous-sch\'ema non vide mais n'intersecte pas $X_j$. On en d\'eduit $q_*(p^*[X_i])\neq q_*(p^*[X_j])$ comme des cycles premiers.
\end{proof}
 \begin{defi}\label{definition_of_Cayley_vareity}
  Soit $X$ un sous-sch\'ema ferm\'e de dimension pure de $\mathbb P(V)$. On dit que le cycle d\'etermin\'e dans la proposition \ref{cayleyform} est \textit{diviseur de Cayley} de $X$. De plus, on dit que l'hypersurface de $\mathbb P\left(\bigwedge^{d+1}V^\vee\right)$ dont le cycle fondamental est celui d\'etermin\'e dans la proposition \ref{cayleyform} est la \textit{vari\'et\'e de Cayley} de $X$.
\end{defi}
\begin{rema}
  Avec la construction dans la proposition \ref{cayleyform}. Si on remplace $\mathbb P\left(\bigwedge^{d+1}V^\vee\right)$ par la grassmannienne $\hbox{Gr}\left(d+1,V^\vee\right)$, on a presque le même r\'esultat. Dans ce cas-l\`a, le cycle sur $\hbox{Gr}\left(d+1,V^\vee\right)$ obtenu par le sens similaire est appel\'e le \textit{diviseur de Chow}, et l'hypersurface de $\hbox{Gr}\left(d+1,V^\vee\right)$ obtenue par le m\^eme sens que celui dans la d\'efinition \ref{definition_of_Cayley_vareity} est appel\'ee la \textit{vari\'et\'e de Chow}, qui est de degr\'e $\delta$ aussi. Voir \cite[\S3.2.B]{Gelfandal94} pour plus d\'etails sur l'approche.
\end{rema}

\subsection{Le diviseur de Cayley sur un anneau de Dedekind}\label{formation of Cayley form over dedekind ring}
Dans cette partie, on donnera une construction de diviseurs de Cayley sur un anneau de Dedekind. Certaines id\'ees sont inspir\'ees par \cite[\S 4.3.1]{BGS94}.

Soient $A$ un anneau de Dedekind, $\sE$ un fibr\'e vectoriel de rang $n+1$ sur $\spec A$, et $\sE^\vee$ le fibr\'e dual de $\sE$. \`A tout sch\'ema $L$ sur $\spec A$, le foncteur grassmannien associe l'ensemble de modules quotients localement libres de $\sE^\vee\otimes_{A}\O_L$ de rang $d+1$ sur $\O_L$. On d\'esigne par $\hbox{Gr}\left(d+1,\sE^\vee\right)$ le sch\'ema qui repr\'esente ce foncteur grassmannien. En particulier, si $d=0$, on le d\'esigne par $\mathbb P(\sE)$ pour des raisons de simplicit\'e.

Dans la suite, on introduit le plongement de Pl\"ucker $\hbox{Gr}(d+1,\sE^\vee)\rightarrow\mathbb P\left(\bigwedge^{d+1}\sE^\vee\right)$, et le sous-sch\'ema d'incidence $\widetilde{\Gamma}$ de $\mathbb P\left(\sE\right)\times_{\spec A}\mathbb P\left(\bigwedge^{d+1}\sE^\vee\right)$ sur $\spec A$. Pour toute $A$-alg\`ebre $k$ qui est un corps, le plongement de Pl\"ucker envoie un point de $\hbox{Gr}\left(d+1,\sE_k^\vee\right)$ dans $\mathbb P\left(\bigwedge^{d+1}\sE_k^\vee\right)$ par \eqref{plucker embedding}. De plus, les points de $\widetilde{\Gamma}$ \`a valeur dans $k$ sont les couples \[\left(\xi,\alpha\right)\in \mathbb P\left(\sE_k\right)(k)\times\mathbb P\left(\bigwedge^{d+1}\sE_k^\vee\right)(k)\] satisfaisant $\theta\left(\xi\otimes\alpha\right)=0$, o\`u $\theta$ est d\'efini dans \eqref{plucker embedding}.

Soient \[\widetilde{p}': \mathbb P\left(\sE\right)\times_{\spec A}\mathbb P\left(\bigwedge\nolimits^{d+1}\sE^\vee\right)\rightarrow\mathbb P\left(\sE\right)\] et \[\widetilde{q}': \mathbb P\left(\sE\right)\times_{\spec A}\mathbb P\left(\bigwedge\nolimits^{d+1}\sE^\vee\right)\rightarrow\mathbb P\left(\bigwedge\nolimits^{d+1}\sE^\vee\right)\] les deux projections canoniques. et $\widetilde{v}:\widetilde{\Gamma}\rightarrow\mathbb P\left(\sE\right)\times_{\spec A}\mathbb P\left(\bigwedge^{d+1}\sE^\vee\right)$ le plongement canonique. On d\'efinit \[\widetilde p=\widetilde{p}'\circ\widetilde v:\widetilde{\Gamma}\rightarrow\mathbb P\left(\sE\right)\hbox{ et }\widetilde q=\widetilde{q}'\circ\widetilde v:\widetilde{\Gamma}\rightarrow\mathbb P\left(\bigwedge\nolimits^{d+1}\sE^\vee\right).\]
Alors on a le diagramm commutatif
\[\xymatrix{ & \widetilde{\Gamma}\ar@/^2pc/[ddr]^{\widetilde q}\ar@/_2pc/[ddl]_{\widetilde p}\ar[d]^{\widetilde{v}}&\\ & \mathbb P(\sE)\times_{\spec A}\mathbb P\left(\bigwedge\nolimits^{d+1}\sE^\vee\right)\ar[dr]^{\widetilde q'}\ar[dl]_{\widetilde p'}&\\ \mathbb P(\sE)\ar[r]& \spec A&\mathbb P\left(\bigwedge\nolimits^{d+1}\sE^\vee\right).\ar[l]}\]
On a la proposition suivante concernant le cycle $\widetilde{q}_*\left( \widetilde{p}^{*}\left[\mathscr X\right]\right)$, dont la d\'emonstration est m\^eme que celle de \cite[Lemma 4.3.1]{BGS94}.
\begin{prop}\label{flatness of cayley form}
   Soit $\mathscr X$ un sous-sch\'ema ferm\'e de dimension pure de $\mathbb P\left(\sE\right)$. Si $\mathscr X$ est plat sur $\spec A$ (\resp plat sur $\spec A$ et irr\'eductible), $\widetilde{q}_*\left(\widetilde{p}^{*}\left[\mathscr X\right]\right)\rightarrow\spec A$ l'est aussi.
\end{prop}
\begin{proof}
D'abord, on suppose que $\mathscr X$ est un sch\'ema int\`egre. Un sch\'ema int\`egre sur $\spec A$ est plat si et seulement si son point g\'en\'erique se trouve au-dessus du point g\'en\'erique $\spec K$ de $\spec A$. Comme le morphisme $\widetilde{p}:\widetilde{\Gamma}\rightarrow \mathbb P(\sE)$ est lisse \`a fibres g\'eom\'etriquement connexes, le cycle $\widetilde{p}^{*}(\mathscr X)$ est irr\'eductible aussi. Son point g\'en\'erique se trouve au-dessus de celui de $\mathscr X$, et se trouve au-dessus de $\spec K$.

On d\'esigne par $mW$ le cycle $\widetilde{q}_*\left(\widetilde{p}^{*}[\mathscr X]\right)$, o\`u $m$ est un entier positif. Dans ce cas-l\`a, $W$ est plat sur $\spec A$. L'entier $m$ sera z\'ero si $W$ est de codimension plus grande que ou \'egale \`a $1$ dans $\widetilde{p}^{*}(\mathscr X)$ (cf. \cite[\S 1.4]{Fulton}), sinon $m$ s'indentifie au degr\'e du corps des fonctions rationnelles de $\widetilde{p}^{*}(\mathscr X)$ sur celui de $W$. Comme l'image directe propre commute avec l'image inverse (cf. \cite[Proposition 1.7 et \S20.1]{Fulton}), on est capacit\'e de calculer $m$ en appliquant le changement de base $\spec K\rightarrow\spec A$. Alors on r\'eduit le probl\`eme au cas o\`u l'annear de base est un corps, qui est d\'emontr\'e dans la proposition \ref{cayleyform}, voir \cite[Proposition 3.4]{Chen1} aussi.

Si $\mathscr X$ n'est pas int\`egre, soit $\left[\mathscr X\right]=\sum\limits_{i\in I} m_i[\mathscr X_i]$ le cycle fondamental de $\mathscr X$, alors on a
\[\widetilde{q}_*\left(\widetilde{p}^*\left[\mathscr X\right]\right)=\sum_{i\in I} m_i\widetilde{q}_*\left(\widetilde{p}^*[\mathscr X_i]\right).\]
Donc on a le r\'esultat en appliquant l'argument ci-dessus composante par composante.
\end{proof}

\begin{defi}\label{cayley form over dedekind ring}
 Soit $\mathscr X$ un sous-sch\'ema ferm\'e de $\mathbb P(\sE)$ sur $\spec A$. On dit que le cycle $\widetilde{q}_*\left( \widetilde{p}^{*}\left[\mathscr X\right]\right)$ est le \textit{diviseur de Cayley} de $\mathscr X$ sur $\mathbb P\left(\bigwedge^{d+1}\sE^\vee\right)$.
\end{defi}

La proposition suivante est autour de la commutativit\'e de la construction du diviseur de Cayley et certains changements de base, qui est \'enonc\'e dans \cite[\S 4.3.2 (i)]{BGS94}. On va donner une d\'emonstration d\'etaill\'ee de l'assertion pour le cas d'anneau de Dedekind.

\begin{prop}\label{compatible with flat base change}
  Avec toutes les notations et la constructions dans cette partie. Soit $\mathscr X$ un sous-sch\'ema ferm\'e de dimension pure de $\mathbb P(\sE)$ sur $\spec A$, o\`u $A$ est un anneau de Dedekind. Soit $T$ un sch\'ema noeth\'erien r\'egulier qui satisfait l'une des deux condition suivantes:
  \begin{description}
    \item[(i)] le changement de base $T\rightarrow \spec A$ est plat;
    \item[(ii)] le sch\'ema $T$ est un point ferm\'e dans le changement de base $T\rightarrow \spec A$, et $\mathscr X\times_{\spec A}T\rightarrow T$ et $\mathscr X\rightarrow\spec A$ ont la m\^eme dimension relative.
  \end{description}
  Alors on a
  \[\left(\widetilde{q}_{T}\right)_*\left(\left(\widetilde{p}_T\right)^{*}\left[\mathscr X_T\right]\right)=\left(\widetilde{q}_*\left(\widetilde{p}^{*}\left[\mathscr X\right]\right)\right)_T.\]
  Autrement dit, la construction du diviseur de Cayley commute au changement de base satisfaisant les condtions ci-dessus.
\end{prop}
\begin{proof}
Par la construction dans l'\'enonc\'e, on a le diagram commutatif
\[\xymatrix{\relax \mathbb P(\sE)_T\ar[d]^-{r_1}&\widetilde{\Gamma}_T\ar[l]_-{\widetilde{p}_T}\ar[d]^-{r_2}\ar[r]^-{\widetilde{q}_T}&\mathbb P\left(\bigwedge^{d+1}\sE^\vee\right)_T\ar[d]^-{r_3}\\
\mathbb P(\sE)&\widetilde{\Gamma}\ar[l]_-{\widetilde{p}}\ar[r]^-{\widetilde{q}}&\mathbb P\left(\bigwedge^{d+1}\sE^\vee\right)}\]
induit par le changement de base $T\rightarrow \spec A$.
\begin{description}
  \item[(i)] Si le changement de base $T\rightarrow \spec A$ est plat, les morphismes $r_1$, $r_2$ et $r_3$ sont plats aussi. Par d\'efinition, on a $\widetilde{p}\circ r_2=r_1\circ \widetilde{p}_T$, d'o\`u l'on a \[r_2^*(\widetilde{p}^*[\mathscr X])=\widetilde{p}_T^*(r_1^*[\mathscr X])\] par \cite[Lemma 1.7.1]{Fulton} coupl\'e avec \cite[\S20.1]{Fulton}.

      On d\'esigne $[\mathscr Y]=\widetilde{p}^*[\mathscr X]$ pour simplifier. Car le morphisme $\widetilde{q}$ est propre, on a \[r_3^*(\widetilde{q}_*[\mathscr Y])=(\widetilde{q}_{T})_*(r_2^*[\mathscr Y])\] d'apr\`es \cite[Proposition 1.7]{Fulton} coupl\'e avec \cite[\S20.1]{Fulton}, qui termine la d\'emonstration.
  \item[(ii)] Maintenant on consid\`ere le cas satisfaisant la condition (ii). Dans ce cas-l\`a, $T$ est un diviseur de Cartier sur $\spec A$. Alors on peut identifier $\mathbb P(\mathcal E)_T$ (\resp $\widetilde{\Gamma}_T$ et $\mathbb P\left(\bigwedge^{d+1}\sE^\vee\right)_T$) \`a un sous-sch\'ema ferm\'e de codimension $1$ de $\mathbb P(\mathcal E)$ (\resp $\widetilde{\Gamma}$ et $\mathbb P\left(\bigwedge^{d+1}\sE^\vee\right)$), \`a savoir la fibre de $\mathbb P(\mathcal E)$ (\resp $\widetilde{\Gamma}$ et $\mathbb P\left(\bigwedge^{d+1}\sE^\vee\right)$) au-dessus de l'image de $T\rightarrow\spec A$. Alors les morphismes $r_1$, $r_2$ et $r_3$ sont des immersions ferm\'ees.

      Avant tout, on suppose que $\mathscr X$ est int\`egre. D'abord, on d\'emontrera $\mathscr X\nsubseteq\mathbb P(\sE)_T$ comme des sous-sch\'emas ferm\'e de $\mathbb P(\sE)$. Si $\mathscr X\subseteq\mathbb P(\sE)_T$, alors $\mathscr X_T$ a la m\^eme dimension que celle de $\mathscr X$. La dimension de $\mathbb P(\sE)_T$ et $1$ plus petite que celle de $\mathbb P(\sE)$, donc on a une contradiction \`a partir de la condition sur la dimension relative dans (ii).

      Alors on a $\mathscr X\nsubseteq\mathbb P(\sE)_T$. Par la construction dans \cite[Definition 2.3, Remark 2.3]{Fulton} coupl\'e avec \cite[\S20.1]{Fulton}, on a
      \[[\mathscr X]\cdot[\mathbb P(\sE)_T]=[\mathscr X\cap \mathbb P(\sE)_T]=[r_1^{-1}\left(\mathscr X\right)]=r_1^*[\mathscr X]\]
      comme des cycles sur $\mathbb P(\sE)_T$.

      De plus, les morphismes $\widetilde{p}$ et $\widetilde{p}_T$ sont lisses et ont les fibres g\'eom\'etriquement connexes, donc l'inverse sch\'ematique $\mathscr Y=\widetilde{p}^{-1}(\mathscr X)$ est irr\'eductible, d'o\`u l'on a $[\mathscr Y]=\widetilde{p}^*[\mathscr X]$. Alors par le m\^eme argument que celui ci-dessus, on a
      \[r_2^*[\mathscr Y]=[\mathscr Y]\cdot[\widetilde{\Gamma}_T].\]
      comme des cycles sur $\widetilde{\Gamma}_T$. De plus, par \cite[Definition 2.3, Remark 2.3]{Fulton} encore coupl\'e avec \cite[\S20.1]{Fulton}, on a
      \[[\mathscr Y]\cdot[\widetilde{\Gamma}_T]=[\widetilde{p}^{-1}(\mathscr X)\cap\widetilde{p}_T^{-1}(\mathbb P(\sE)_T)]=[\widetilde{p}^{-1}(\mathscr X\cap\mathbb P(\sE)_T)]=\widetilde{p}^*_T(r_1^*[\mathscr X])\]
      comme des cycles sur $\widetilde{\Gamma}_T$.

       Par les arugments ci-dessus, on obtient \[r_2^*(\widetilde{p}^*[\mathscr X])=\widetilde{p}_T^*(r_1^*[\mathscr X]),\]
       qui signifie
       \[(\widetilde{p}^*[\mathscr X])_T=\widetilde{p}_T^*([\mathscr X_T]).\]

      Dans la suite, on a $\mathscr Y\nsubseteq\widetilde{\Gamma}_T$, consid\'er\'es commes des sous-sch\'emas ferm\'es de $\widetilde{\Gamma}$ par la condition sur les dimensions relatives dans (ii) aussi. Car le morphism $\widetilde{q}$ est propre, on a
      \[\widetilde{q}_*\left([\mathscr Y]\cdot[\widetilde{\Gamma}_T]\right)=\widetilde{q}_*\left[\mathscr Y\right]\cdot\left[\mathbb P\left(\bigwedge\nolimits^{d+1}\sE^\vee\right)_T\right]\]
       par la formule de projection de cycles dans la th\'eorie de l'intersection (cf. \cite[Chap. 5, C), \S7, (10)]{LNM11}), d'o\`u l'on a \[(\widetilde{q}_{T})_*[\mathscr Y_T]=(\widetilde{q}_*[\mathscr Y])_T.\]

      Si $\mathscr X$ n'est pas int\`egre, on a le r\'esultat en appliquant l'argument ci-dessus composante par composante.
\end{description}

\end{proof}

\begin{rema}
  Dans \cite[\S 4.3.2 (i)]{BGS94}, on remplace $\spec A$ dans la proposition \ref{compatible with flat base change} par un sch\'ema noeth\'erien r\'egulier $T$, et on consid\`ere un plongement ferm\'e r\'egulier de $T'$ dans $T$. Dans ce cas-l\`a, la formation de la vari\'et\'e de Chow encore commute au changement de base $T'\rightarrow T$ si on suppose la m\^eme condition sur la dimension relative que celle dans la proposition \ref{compatible with flat base change} (ii).
\end{rema}
\section{Hauteurs d'un sch\'ema projectif}
Dans cette section, on introduira certaines fonctions hauteur et les comparera.
\subsection{Pr\'eliminaires}
Afin d'introduire des fonctions hauteur, d'abord on introduit certaines notions de base de la th\'eorie alg\'ebrique des nombres. On utilisera ces notions dans tout l'article sauf mention contraire.

Soient $K$ un corps de nombres, et $\O_K$ l'anneau des entiers de $K$. Dans tout l'article, on d\'esigne par $M_{K,f}$ l'ensemble des places finies de $K$, par $M_{K,\infty}$ l'ensemble des places infinies de $K$ et par $M_K$ l'ensemble des places de $K$.

\subsubsection*{Fibr\'e vectoriel norm\'e}
On appelle \textit{fibr\'e vectoriel norm\'e} sur $\spec\O_K$ toute donn\'ee $\overline {\sE}=\left(\sE,h\right)$, o\`u:
\begin{enumerate}
  \item $\sE$ est un $\O_K$-module projectif de rang fini;
  \item $h=(\|\ndot\|_v)_{v\in M_{K,\infty}}$ est une famille de normes, o\`u $\|\ndot\|_v$ est une norme sur $\sE\otimes_{\O_{K,v}}\mathbb C$ qui est invariante sous l'action du groupe $\gal (\mathbb C/K_v)$.
\end{enumerate}

Le rang de $\overline {\sE}$ est d\'efini comme celui de $\sE$. Si toutes les normes dans $h$ sont hermitiennes, on dit que $\overline{\sE}$ est un \textit{fibr\'e vectoriel hermitien} sur $\spec \O_K$. Si le rang de $\overline {\sE}$ est $1$, on dit que $\overline{\sE}$ est un fibr\'e en driotes hermitien sur $\spec\O_K$.

Soient $\overline L=(L,(\|\ndot\|_v)_{v\in M_{K,\infty}})$ un fibr\'e en droites hermitien sur $\spec\O_K$, et $s\in L\otimes_{\O_K}K$ un \'el\'ement non-nul. Pour une place $v\in M_{K,f}$, si $\Q_v$ est le corps $p$-adique, on d\'efinit $|a|_v=|N_{K_v/\Q_v}(a)|_p^{1/[K_v:\Q_v]}$, o\`u $|\ndot|_p$ est la valeur $p$-adique. De plus, on d\'efinit la norme $\|s\|_v=\inf\left\{|a|_v| a\in K_v^\times, a^{-1}s\in L\otimes_{\O_K}\widehat{\O}_{K,v}\right\}$ donn\'ee par le mod\`ele.

 \subsubsection*{Fonction hauteur}
 Soient $\overline{\sE}$ un fibr\'e vectoriel hermitien de rang $n+1$ sur $\spec\O_K$, et $\sE_K=\sE\otimes_{\O_K}K$. On d\'esigne par $\mathrm{Chow}_{d,\delta}^n(K)$ l'ensemble des sous-sch\'emas ferm\'es de $\mathbb P(\sE_K)$, qui sont de dimension pure $d$ et de degr\'e $\delta$ plong\'es dans $\mathbb P(\sE_K)$. Soit $\overline{\mathcal L}=\left(\mathcal L,(\|\ndot\|_v)_{v\in M_{K,\infty}}\right)$ un fibr\'e en droites ample arithm\'etique hermitien sur $\mathbb P(\sE)$. Alors la hauteur d'un sch\'ema projectif par rapport au fibr\'e en droites hermitien $\overline{\mathcal L}$ est une fonction
\[h_{\overline{\mathcal L}}:\mathrm{Chow}_{d,\delta}^n(K)\rightarrow\mathbb R,\]
qui mesure la complexit\'e arithm\'etique d'un $K$-sch\'ema projectif.

Plusieurs fonctions hauteur de sch\'emas arithm\'etiques seront utilis\'ees dans cet article. Soient $X$ un sous-sch\'ema ferm\'e de $\mathbb P(\sE_K)$ de dimension pure, et $\mathscr X$ l'adh\'erence sch\'ematique de $X$ dans $\mathbb P(\sE)$. D'abord on introduira une hauteur de $\mathscr X$ pour le cas g\'en\'eral. Au cas o\`u $X$ est une hypersurface, on a quelques propri\'et\'es sp\'eciales.
\subsection{Hauteurs d'un sch\'ema projectif de dimension pure}
D'abord, on d\'efinit une fonction hauteur introduite par Faltings dans \cite[Definition 2.5]{Faltings91} par la th\'eorie de l'intersection arithm\'etrique. La th\'eorie de l'intersection arithm\'etique est d\'evelopp\'ee par Gillet et Soul\'e dans \cite{Gillet_Soule-IHES90}, voir \cite{Soule92} pour une introduction syst\'ematique de cette th\'eorie.
\begin{defi}[Hauteur arakelovienne]\label{arakelov height of projective variety}
Soient $\overline{\sE}$ un fibr\'e vectoriel hermitien de rang $n+1$ sur $\spec\O_K$, et $\overline {\mathcal L}$ un fibr\'e en droites hermitien sur $\mathbb P(\sE)$. Soient $X$ un sous-sch\'ema ferm\'e de dimension pure $d$ de $\mathbb P(\sE_K)$, et $\mathscr X$ l'adh\'erence sch\'ematique de $X$ dans $\mathbb P(\sE)$. La \textit{hauteur arakelovienne} de $X$ est d\'efinie comme le nombre de l'intersection arithm\'etique
\begin{equation*}
  \frac{1}{[K:\Q]}\adeg\left(\widehat{c}_1(\overline{\mathcal{L}})^{d+1}\cdot[\mathscr X]\right),
\end{equation*}
o\`u $\widehat{c}_1(\overline{\mathcal{L}})$ est la premi\`ere classe de Chern arithm\'etique de $\overline{\mathcal L}$. Cette hauteur est not\'ee comme $h_{\overline{\mathcal{L}}}(X)$ ou $h_{\overline{\mathcal{L}}}(\mathscr X)$.
\end{defi}

\subsection{Hauteur d'une hypersurface projective}
Soient $\overline{\sE}$ un fibr\'e vectoriel hermitien de rang $n+1$ sur $\spec\O_K$, et $f(T_0,\ldots,T_n)$ un polyn\^ome homog\`ene \`a coefficients dans $K$ de degr\'e $\delta$, alors
\begin{equation*}
  X=\proj\left(K[T_0,\ldots,T_n]/\left(f(T_0,\ldots,T_n)\right)\right)
\end{equation*}
est un sous-sch\'ema ferm\'e de $\mathbb P(\sE_K)$ de dimension $n-1$. Il est en fait une hypersurface de $\mathbb P(\sE_K)$ de degr\'e $\delta$ (cf. \cite[Proposition 7.6, Chap. I]{GTM52}). Dans cette partie, on discutera des hauteurs d'une hypersurface dans $\mathbb P(\sE_K)$.

Pour tout $v\in M_{K,\infty}$, on d\'esigne par $|\ndot|_v$ la valeur absolue \`a la place $v$ qui satisfait $|a|_v=|N_{K_v/\Q_v}(a)|^{1/[K_v:\Q_v]}$, o\`u $|\ndot|$ est la valeur absolue usuelle sur $\mathbb R$ ou $\mathbb C$.
\begin{defi}[Hauteur classique]\label{classical height of hypersurface}
Soit
  \[f(T_0,\ldots,T_n)=\sum_{\begin{subarray}{x}(i_0,\ldots,i_n)\in\mathbb N^{n+1}\end{subarray}}a_{i_0,\ldots,i_n}T_0^{i_0}\cdots T_n^{i_n}\]
  un polyn\^ome non-nul \`a coefficients dans $K$.
  La \textit{hauteur classique} $h(f)$ du polyn\^ome homog\`ene $f$ est d\'efinie comme
\begin{equation*}
  h(f)=\sum_{v\in M_K}\frac{[K_v:\Q_v]}{[K:\Q]}\log\max\limits_{\begin{subarray}{x}(i_0,\ldots,i_n)\in\mathbb N^{n+1}\end{subarray}}\{|a_{i_0,\ldots, i_n}|_v\}\in\mathbb R_+.
\end{equation*}
De plus, si $f$ est homog\`ene et $X$ est l'hypersurface de $\mathbb P(\sE_K)$ d\'efinie par $f$, on d\'efinit $h(X)=h(f)$ comme la \textit{hauteur classique} de l'hypersurface $X$.
\end{defi}
La hauteur introduite dans la d\'efinition \ref{classical height of hypersurface} est invariante sous l'extension finie de corps de nombres.

Afin d'introduire une autre fonction hauteur, on introduit la mesure de Mahler.

\begin{defi}[Mesure de Mahler]\label{Mahler measure}
Soit $f(T_1,\ldots,T_n)\in \C[T_1,\ldots,T_n]$ un polyn\^ome. On d\'efinit la \textit{mesure de Mahler} du polyn\^ome $f(T_1,\ldots,T_n)$ comme
\begin{equation*}
  M(f)=\exp\left(\int_{[0,1]^{n}}\log|f(e^{2\pi it_1},\ldots,e^{2\pi it_n})| dt_1\cdots dt_n\right),
\end{equation*}
o\`u $|\ndot|$ est la valeur absolue usuelle sur $\mathbb C$.
\end{defi}
Pour un corps de nombres $K$, soient $f(T_1,\ldots,T_n)\in K[T_1,\ldots,T_n]$ et $v:K\hookrightarrow\C$ un plongement. On d\'efinit
\begin{equation}\label{Mahler mesure of v}
  M(v(f))=\exp\left(\int_{[0,1]^{n}}\log|v(f)(e^{2\pi it_1},\ldots,e^{2\pi it_n})| dt_1\cdots dt_n\right)
\end{equation}
comme la mesure de Mahler du polyn\^ome $f$ par rapport au plongement $v$.

On va introduire la fonction hauteur ci-dessous, qui est originaire de \cite[D\'efinition 1.10]{Philippon86}.
\begin{defi}[Hauteur de Philippon]\label{Philippon height}
  Soit $X$ une hypersurface de $\mathbb P(\sE_K)$ d\'efinie par le polyn\^ome homog\`ene
  \[f(T_0,\ldots,T_n)=\sum_{\begin{subarray}{x}(i_0,\ldots,i_n)\in\mathbb N^{n+1}\\ i_0+\cdots+i_n=\delta\end{subarray}}a_{i_0,\ldots,i_n}T_0^{i_0}\cdots T_n^{i_n},\]
   la \textit{hauteur de Philippon} de $X$ est d\'efinie comme
  \begin{equation*}
    h_{Ph}(X):=\sum_{v\in M_{K,f}}\frac{[K_v:\Q_v]}{[K:\Q]}\log\|f\|_v+\frac{1}{[K:\Q]}\sum_{v\in M_{K,\infty}}\log M(v(f)),
  \end{equation*}
  o\`u l'on d\'efinit
  \begin{equation}\label{norm by model of polynomial}
    \|f\|_{v}=\max_{\begin{subarray}{x}(i_0,\ldots,i_n)\in\mathbb N^{n+1}\\ i_0+\cdots+i_n=\delta\end{subarray}}\{|a_{i_0,\ldots,i_n}|_{v}\}
  \end{equation}
  pour tout $v\in M_{K,f}$, et $M(v(f))$ est la mesure de Mahler de $f$ par rapport \`a la place $v\in M_{K,\infty}$ d\'efinie par \eqref{Mahler mesure of v} dans la d\'efinition \ref{Mahler measure}.
\end{defi}

Soient $\overline{\sE}$ un fibr\'e vectoriel hermitien sur $\spec\O_K$, et $s\in H^0\left(\mathbb P(\sE_K), \O_{\mathbb P(\sE_K)}(\delta)\right)$ une section globale non-nulle. Pour toute place infinie $v\in M_{K,\infty}$ fix\'ee, on d\'esigne par $\|\ndot\|_{v,\mathrm{FS}}$ la m\'etrique de Fubini-Study sur $\mathbb P(\sE_{K,v})(\C)$ par rapport \`a la place infinie $v$. De plus, on d\'efinit
\begin{equation}\label{infinite norm}
\|s\|_{v,\infty}=\sup_{x\in \mathbb P(\sE_{K,v})(\C)}\|s(x)\|_{v,\mathrm{FS}}=\sup_{\|x\|_v=1}\|s(x)\|_{v,\mathrm{FS}}.
\end{equation}
 Soient $U\left(\sE_{K,v},\|\ndot\|_v\right)$ le groupe unitaire qui agit sur $\sE_{K,v}$, et $dv(x)$ une mesure $U\left(\sE_{K,v},\|\ndot\|_v\right)$-invariante unique probabiliste sur $\mathbb P(\sE_{K,v})(\C)$, ce qui signifie
 \[\int_{\mathbb P(\sE_{K,v})(\C)}dv(x)=1.\]

En suite, on d\'efinit
\begin{equation}\label{0-norm}
  \|s\|_{v,0}=\exp\left(\int_{\mathbb P(\sE_{K,v})(\C)}\log\|s(x)\|_{v,\mathrm{FS}}dv(x)\right).
\end{equation}
Pour tout nombre r\'eel strictement positif $p$, on d\'efinit
\begin{equation}\label{p-norm}
\|s\|_{v,p}=\left(\int_{\mathbb P(\sE_{K,v})(\C)}\|s(x)\|^p_{v,\mathrm{FS}}dv(x)\right)^{1/p}.
\end{equation}

Avec les norme \eqref{infinite norm}, \eqref{0-norm} et \eqref{p-norm} sur l'espace $H^0\left(\mathbb P(\sE_K), \O_{\mathbb P(\sE_K)}(\delta)\right)$, on d\'efinit la fonction hauteur suivante.
\begin{defi}[$p$-hauteur]\label{p-height}
  Soit $s\in H^0\left(\mathbb P(\sE_K),\O_{\mathbb P(\sE_K)}(\delta)\right)$ non-nulle. On d\'efinit la \textit{$p$-hauteur} de l'hypersurface $X$ de $\mathbb P(\sE_K)$ d\'efinie par la section globale $s$ comme
  \[h_p(X)=\sum_{v\in M_{K,f}}\frac{[K_v:\Q_v]}{[K:\Q]}\log\|s\|_v+\sum_{v\in M_{K,\infty}}\frac{[K_v:\Q_v]}{[K:\Q]}\log \|s\|_{v,p},\]
  o\`u $\|\ndot\|_v$ est la m\^eme que \eqref{norm by model of polynomial} pour $v\in M_{K,f}$ lorsque $s$ est consid\'er\'e comme un polyn\^ome homog\`ene, et la norme $\|\ndot\|_{v,p}$ est d\'efinie dans les \'egalit\'es \eqref{infinite norm}, \eqref{0-norm} et \eqref{p-norm} pour les $p\in[0,+\infty]$.
\end{defi}

\subsubsection*{Hauteur de la vari\'et\'e de Cayley}
On a d\'ej\`a d\'efini la vari\'et\'e de Cayley dans \S\ref{chow form and cayley form}. Dans cette partie, on va \'etudier la hauteur de vari\'et\'e de Cayley.

On consid\`ere le fibr\'e vectoriel hermitien
  \begin{equation}\label{sE with l^2}
    \overline{\sE}=\left(\O_K^{\oplus(n+1)},(\|\ndot\|_v)_{v\in M_{K,\infty}}\right)
  \end{equation}sur $\spec\O_K$, qui est muni des $\ell^2$-normes d\'efinies suivantes: pour tout plongement $v:K\hookrightarrow\C$, la norme $\|\ndot\|_v$ envoie le point $(x_0,\ldots,x_n)$ sur $\sqrt{|v(x_0)|^2+\cdots+|v(x_n)|^2}$.

Soit $X$ un sous-sch\'ema de dimension pure de dimension $d$ et degr\'e $\delta$ dans $\mathbb P(\sE_K)$ avec le fibr\'e hermitien $\overline{\sE}$ sur $\spec\O_K$ dans \eqref{sE with l^2}. On d\'esigne par $\widetilde h_{0}(X)$ la $0$-hauteur de la vari\'et\'e de Cayley de $X$ d\'efinie dans la d\'efinition \ref{p-height}. D'apr\`es \cite[Theorem 4.3.8]{BGS94}, on a
  \begin{equation}\label{0-height vs arakelov height}
    \widetilde h_{0}(X)= h_{\overline{\O(1)}}(X)-\frac{1}{2}\delta\mathcal H_N,
  \end{equation}
  o\`u $N=\rg\left(\bigwedge^{d+1}\sE_K\right)-1={n+1\choose d+1}-1$, $\mathcal H_N=1+\frac{1}{2}+\cdots+\frac{1}{N}$, et $\overline {\O(1)}$ est muni des m\'etriques de Fubini-Study \`a partir de $\overline{\sE}$ ci-dessus.
\subsubsection*{Sur la comparaison des hauteurs}
Dans cette partie, on comparera certaines hauteurs utiles de vari\'et\'es arithm\'etiques. Soient $\overline {\mathcal E}$ le fibr\'e hermitien d\'efini dans \eqref{sE with l^2}, et $s\in H^0\left(\mathbb P(\sE_K),\O_{\mathbb P(\sE_K)}(\delta)\right)$ une section globale non-nulle. On consid\`ere une telle section comme un polyn\^ome homog\`ene de degr\'e $\delta$ dans $K[T_0,\ldots,T_n]$. D'apr\`es \cite[Th\'eor\`eme 1]{Philippon91}, on a
\[0\leqslant \log M(v(s))-\log \|s\|_{v,0}\leqslant4\delta\log (n+1)\]
pour toute place $v\in M_{K,\infty}$, o\`u $M(v(s))$ est la mesure de Mahler de la section $s$ consid\'er\'ee comme un polyn\^ome homog\`ene de degr\'e $\delta$ par rapport \`a cette place, voir la d\'efinition \ref{Mahler measure} et \eqref{Mahler mesure of v} pour la d\'efinition.

Soit $X$ une hypersurface de $\mathbb P(\sE_K)$ de degr\'e $\delta$, alors on en d\'eduit
\begin{equation}\label{philippon height vs 0-height}
  0\leqslant h_{Ph}(X)-h_0(X)\leqslant 4\delta\log (n+1),
\end{equation}
voir la d\'efinition \ref{Philippon height} et la d\'efinition \ref{p-height} pour les d\'efinitions des deux hauteurs dans l'in\'egalit\'e \eqref{philippon height vs 0-height}.

Pour comparer la hauteur classique et la hauteur de Philippon d'une hypersurface, il faut comparer la mesure de Mahler et la valeur absolue maximale des coefficients du polyn\^ome qui d\'efinit l'hypersurface, o\`u une place infinie $v\in M_{K,\infty}$ est fix\'ee. On utilise la m\'ethode dans \cite[\S B.7]{Hindry}.
\begin{prop}\label{na\"ive height vs philippon height}
  Soient $\overline{\sE}$ d\'efini dans \eqref{sE with l^2}, et $X$ une hypersurface de $\mathbb P(\sE_K)$ de degr\'e $\delta$. Alors on a
   \[-\frac{1}{2}\log((n+1)(\delta+1))\leqslant h(X)-h_{Ph}(X)\leqslant(n+1)\delta\log2,\]
  o\`u la hauteur de Philippon $h_{Ph}(X)$ de $X$ est d\'efinie dans la d\'efinition \ref{Philippon height}, et la hauteur classique $h(X)$ de $X$ est d\'efinie dans la d\'efinition \ref{classical height of hypersurface}.
\end{prop}
\begin{proof}
On suppose que $X$ est d\'efini par le polyn\^ome
\[f(T_0,\ldots,T_n)=\sum_{\begin{subarray}{x}(i_0,\ldots,i_n)\in\mathbb N^{n+1}\\ i_0+\cdots+i_n=\delta\end{subarray}}a_{i_0,\ldots,i_n}T_0^{i_0}\cdots T_n^{i_n},\]
et $d_i=\deg_{T_i}(f)$ pour tout $i=0,\ldots,n$. D'apr\`es \cite[Lemma B.7.3.1, Lemma B.7.3.2]{Hindry}, soit $v\in M_{K,\infty}$, on a
\begin{eqnarray*}
  \prod_{v\in M_{K,\infty}}\frac{M(v(f))}{\sqrt{(d_0+1)\cdots(d_n+1)}}&\leqslant&\prod_{v\in M_{K,\infty}}\max_{\begin{subarray}{x}(i_0,\ldots,i_n)\in\mathbb N^{n+1}\\ i_0+\cdots+i_n=\delta\end{subarray}}\{|v(a_{i_0,\ldots,i_n})|\}\\&\leqslant&\prod_{v\in M_{K,\infty}}2^{(n+1)\delta}M(v(f)),
\end{eqnarray*}
o\`u la mesure de Mahler $M(v(f))$ est d\'efinie dans \eqref{Mahler mesure of v}. Si $v\in M_{K,f}$, alors
\[\max_{\begin{subarray}{x}(i_0,\ldots,i_n)\in\mathbb N^{n+1}\\ i_0+\cdots+i_n=\delta\end{subarray}}\{|a_{i_0,\ldots,i_n}|_v\}=\|f\|_v.\]
Donc par la d\'efinition \ref{Philippon height} et la d\'efinition \ref{classical height of hypersurface}, on obtient le r\'esultat, car $d_i\leqslant\delta$ pour tout $i\in\{0,1,\ldots,n\}$.
\end{proof}
On a le r\'esultat suivant en combinant les estimations ci-dessus.
\begin{prop}\label{comparing heights}
  Soient $\overline{\mathcal E}$ d\'efini dans \eqref{sE with l^2}, $X$ un sous-sch\'ema ferm\'e de dimension pure de $\mathbb P(\sE_K)$ de dimension $d$ et de degr\'e $\delta$, et $\psi_{X}\in\sym^\delta\left(\bigwedge^{d+1}\sE_K^\vee\right)$ l'\'el\'ement qui d\'efinit la vari\'et\'e de Cayley de $X$ (voir la proposition \ref{cayleyform} et la d\'efinition \ref{cayley form 2}). Alors on a
  \begin{eqnarray*}
  -\frac{1}{2}\log((N+1)(\delta+1))-\frac{1}{2}\delta\mathcal H_{N}&\leqslant& h(\psi_{X})-h_{\overline{\O(1)}}(X)\\
  &\leqslant&(N+1)\delta\log2+4\delta\log (N+1)-\frac{1}{2}\delta\mathcal H_{N},
  \end{eqnarray*}
  o\`u $h(\psi_{X})$ est d\'efini dans la d\'efinition \ref{classical height of hypersurface}, $h_{\overline{\O(1)}}(X)$ est d\'efini dans la d\'efinition \ref{arakelov height of projective variety}, $N={n+1\choose d+1}-1$, et $\mathcal H_N=1+\cdots+\frac{1}{N}$.
\end{prop}
  \begin{proof}
  Soit $X'$ l'hypersurface projective d\'efinie par $\psi_{X}$ dans $\mathbb P\left(\bigwedge^{d+1}\sE_K^\vee\right)$.
  On compare la hauteur de Philippon (voir la d\'efinition \ref{Philippon height}) de $X'$ et la hauteur classique de $X'$. D'apr\`es la proposition \ref{na\"ive height vs philippon height}, on a
  \begin{equation}\label{inequality naive-philippon}
   -\frac{1}{2}\log((N+1)(\delta+1))\leqslant h(\psi_{X})-h_{Ph}(X')\leqslant(N+1)\delta\log2.
  \end{equation}
  On compare la $0$-hauteur (voir la d\'efinition \ref{p-height}) de $X'$ et la hauteur de Philippon de $X'$. D'apr\`es (\ref{philippon height vs 0-height}), on a
  \begin{equation}\label{inequality philippon-0}
 0\leqslant h_{Ph}(X')-\widetilde{h}_0(X')\leqslant4\delta\log (N+1).
  \end{equation}
  Par (\ref{0-height vs arakelov height}), on a
  \begin{equation}\label{inequality 0-arakelov}
  \widetilde{h}_0(X')=h_{\overline{\O(1)}}(\mathscr X)-\frac{1}{2}\delta\mathcal H_{N}.
  \end{equation}
On combine \eqref{inequality naive-philippon}, \eqref{inequality philippon-0} et \eqref{inequality 0-arakelov}, on obtient le r\'esultat.
  \end{proof}

\subsection{Hauteur ad\'elique}\label{section of adelic height}
Dans cette partie, on introduira une fonction hauteur de la version ad\'elique d'une hypersurface.
\subsubsection*{Rappel de l'anneau ad\'elique}
D'abord, on rappelle la d\'efinition de l'anneau ad\'elique. \'Etant donn\'es un corps de nombre $K$ et son anneau des entiers $\O_K$, on d\'esgine par
\[\mathbb A_K=\left\{(a_v)_v\in\prod_{v\in M_K}K_v\mid\;a_v\in\O_{K,v}\hbox{ sauf pour un nombre fini de }v\in M_{K,f}\right\}\]
l'anneau ad\'elique de $K$,
 et par
 \[\mathbb{A}_{\O_K}=\left\{(a_v)_v\in \mathbb A_K|\;a_v\in\O_{K,v}\hbox{ pour tout }v\in M_{K,f}\right\}\]
 l'anneau ad\'elique des entiers de $K$, voir \cite[Chap. VI, \S1]{Neukirch} pour une introduction autonome de ces notions.

Soit $c\in K$. On d\'esigne par $\Delta(c)$ son image dans $\mathbb A_K$ par rapport au plongement diagonal $\Delta:K\hookrightarrow\mathbb A_K$. De plus, soit $a=(a_v)_{v\in M_K}\in\mathbb A_K$, on d\'efinit
\begin{equation}\label{adelic norm}
  |a|_{\mathbb A_K}=\prod_{v\in M_K}|a_v|_v^{[K_v:\Q_v]}.
\end{equation}
 Par la formule de produit (cf. \cite[Chap III, (1.3)Proposition]{Neukirch}), on a $|\Delta(a)|_{\mathbb A_K}=1$ pour tout $a\in K^\times$.

  On a le lemme suivant sur l'existance d'un \'el\'ement particulier dans $\mathbb A_K$.
 \begin{lemm}\label{denominator adelic}
   Soit $\mathfrak a$ un id\'eal fractionnaire de $\O_K$ avec la d\'ecomposition $\p_1^{n_1}\cdots\p_k^{n_k}$, o\`u $\p_1,\ldots,\p_k$ sont des id\'eaux primiers de $\O_K$ et $n_1,\ldots,n_k\in\mathbb Z$. Alors il existe un \'el\'ement $a=(a_v)_{v\in M_K}\in\mathbb A_K$, tel que $|a|_{\mathbb A_K}=1$ et \`a la place $v\in M_{K,f}$ par rapport \`a $\p_i$, $a_v$ engendre le m\^eme id\'eal fractionnaire que $\p_i^{n_i}\O_{K,\p_i}$ dans $K_{\p_i}$.
 \end{lemm}
\begin{proof}
  Pour chaque $\p_i\in\spm\O_K$, soit $\omega_i$ une uniformisante de $\O_{K,\p_i}$, dont l'existance est en raison du fait que $\O_{K,\p_i}$ est un anneau principal. Afin de construire un tel $a=(a_v)_{v\in M_K}$, pour une place $v\in M_{K,f}$ par rapport \`a $\p_i$, on pose $a_v=\omega_i^{n_i}$. Pour les autre places finies $v$, on pose $a_v=1$.

  Pour toute place $v\in M_{K,\infty}$, on a $K_v\cong \mathbb R$ ou $\mathbb C$. Alors on pose
  \[a_v=\left|\prod_{i=1}^k\#(\O_K/\p_i)^{n_i}\right|^{1/\left[K:\Q\right]}\]
pour toute place $v\in M_{K,\infty}$. Par un calcul \'el\'ementaire, cet \'el\'ement $(a_v)_{v\in M_K}$ satisfait l'assertion.
\end{proof}
\subsubsection*{Hauteur sur l'anneau ad\'elique}
Dans la suite, on consid\`ere un polyn\^ome sur l'anneau ad\'elique. Pour introduire une fonction hauteur, on va d\'efinir ses parties locales comme suit.
\begin{defi}[Partie locale]\label{p-part}
Soient $\{a_{i_0,\ldots,i_n}\}=\{(a^v_{i_0,\ldots,i_n})_{v\in M_K}\}$ une famille finie des \'el\'ements de $\mathbb A_K$ avec les indices $(i_0,\ldots,i_n)\in\mathbb N^{n+1}$, et
\[F(T_0,\ldots,T_n)=\sum_{\begin{subarray}{x}(i_0,\ldots,i_n)\in\mathbb N^{n+1}\end{subarray}}a_{i_0,\ldots,i_n}T_0^{i_0}\cdots T_n^{i_n}\in\mathbb A_K[T_0,\ldots,T_n]\]
non-nul. Pour toute place $v\in M_K$, on d\'esigne par
\[F^{(v)}(T_0,\ldots,T_n)=\sum_{\begin{subarray}{x}(i_0,\ldots,i_n)\in\mathbb N^{n+1}\end{subarray}}a^v_{i_0,\ldots,i_n}T_0^{i_0}\cdots T_n^{i_n}\]
 la \textit{$v$-partie} de $F(T_0,\ldots,T_n)$, ou par $F^{(\p)}(T_0,\ldots,T_n)$ pour le $\p\in\spm\O_K$ correspondant \`a une place $v\in M_{K,f}$ qui est appel\'e la \textit{$\p$-partie} de $F(T_0,\ldots,T_n)$.
\end{defi}
Soient $F$ et $F^{(v)}$ les m\^emes que dans la d\'efinition \ref{p-part}. Pour une place $v\in M_{K}$, on d\'esigne
\[\|F\|_{v}=\|F^{(v)}\|_v=\max_{\begin{subarray}{x}(i_0,\ldots,i_{n})\in\mathbb N^{n}\end{subarray}}\left\{\left|a^v_{i_0,\ldots,i_{n}}\right|_v\right\},\]
ou par $\|F\|_{\p}$ et $\|F^{(\p)}\|_{\p}$ pour le $\p\in\spm\O_K$ correspondant \`a une place finie. De plus, pour une place infinie $v\in M_{K,\infty}$, on d\'esigne
\[\|F\|_{2,v}=\|F^{(v)}\|_{2,v}=\left(\sum_{\begin{subarray}{x}(i_0,\ldots,i_{n})\in\mathbb N^{n}\end{subarray}}\left|a^v_{i_0,\ldots,i_{n}}\right|_v^2\right)^{\frac{1}{2}}.\]
 Avec les notations au-dessus, on introduit une fonction hauteur comme ci-dessous.
\begin{defi}[Hauteur ad\'elique]\label{adelic height}
  Soit $F$ le m\^eme que dans la d\'efinition \ref{p-part}. Les deux \textit{hauteurs ad\'eliques} de $F$ sont d\'efinies comme
  \[H_{\mathbb A_K}(F)=\prod_{v\in M_{K}}\|F\|_v^{[K_v:\Q_v]},\]
  et
  \[H_{\mathbb A_K,2}(F)=\prod_{v\in M_{K,f}}\|F\|_v^{[K_v:\Q_v]}\cdot\prod_{v\in M_{K,\infty}}\|F\|_{2,v}^{[K_v:\Q_v]}.\]
  De plus, on d\'efinit les hauteurs logarithmiquement ad\'eliques comme
  \[h(F)=\frac{1}{[K:\Q]}\log H_{\mathbb A_K}(F)\hbox{ et }h_2(F)=\frac{1}{[K:\Q]}\log H_{\mathbb A_K,2}(F)\]
  respectivement.
\end{defi}
En suite, on introduit la notion de la hauteur infinie.
\begin{defi}[Hauteur infinie ad\'elique]\label{adelic infinite height}
  Soit $F$ le m\^eme que celui dans la d\'efinition \ref{p-part}. Les deux \textit{hauteurs infinies ad\'eliques} de $F$ sont d\'efinies comme
  \[H_{\infty,\mathbb A_K}(F)=\prod_{v\in M_{K,\infty}}\|F\|_v^{[K_v:\Q_v]}\mbox{ et }H_{\infty,\mathbb A_K,2}(F)=\prod_{v\in M_{K,\infty}}\|F\|_{2,v}^{[K_v:\Q_v]}.\]
  De plus, on d\'efinit les hauteurs infinies logarithmiquement ad\'eliques comme
  \[h_\infty(F)=\frac{1}{[K:\Q]}\log H_{\infty,\mathbb A_K}(F)\hbox{ et }h_{\infty,2}(F)=\frac{1}{[K:\Q]}\log H_{\infty,\mathbb A_K,2}(F)\]
  respectivement.
\end{defi}
\begin{rema}\label{compare adelic and classic height}
  Par un calcul \'el\'ementaire, la hauteur ad\'elique est invariante sous la multiplication d'un \'el\'ement $a\in\mathbb A_K$ avec $|a|_{\mathbb A_K}=1$, et est bien s\^ur invariante sous la multiplication d'un \'el\'ement d'un \'el\'ement dans $K^\times$ consid\'er\'e comme un \'el\'ement dans $\mathbb A_K$ par rapport au plongement diagonal $K\hookrightarrow\mathbb A_K$.

  Soient $f\in K[T_0,\ldots,T_n]$, et $F$ l'image canonique de $f$ dans $\mathbb A_K[T_0,\ldots,T_n]$ par rapport au plongement diagonal $K\hookrightarrow\mathbb A_K$. Alors on a
 \[H_K(f)=H_{\mathbb A_K}(F) \hbox{ et }H_{K,2}(f)=H_{\mathbb A_K,2}(F),\]
 o\`u $H_K(f)$ est d\'efini dans la d\'efinition \ref{classical height of hypersurface}.
\end{rema}
Soit
  \[F(T_0,\ldots,T_n)=\sum_{\begin{subarray}{x}(i_0,\ldots,i_n)\in\mathbb N^{n+1}\\ i_0+\cdots+i_n=\delta\end{subarray}}a_{i_0,\ldots,i_n}T_0^{i_0}\cdots T_n^{i_n}\in \mathbb A_K[T_0,\ldots,T_n].\]
Avec toutes les notations dans la d\'efinition \ref{adelic height}, pour tout $v\in M_{K,\infty}$, on a
  \begin{eqnarray}\label{comapre coefficients of h_1 and h_2}
  \max_{\begin{subarray}{x}(i_0,\ldots,i_n)\in\mathbb N^{n+1}\\ i_0+\cdots+i_n=\delta\end{subarray}}\{|a^v_{i_0,\ldots,i_n}|_v\}
  &\leqslant&\left(\sum_{\begin{subarray}{x}(i_0,\ldots,i_n)\in\mathbb N^{n+1}\\ i_0+\cdots+i_n=\delta\end{subarray}}|a^v_{i_0,\ldots,i_n}|_v^{2}\right)^{\frac{1}{2}}\\
  &\leqslant&{n+\delta\choose n}^{\frac{1}{2}}\max_{\begin{subarray}{x}(i_0,\ldots,i_n)\in\mathbb N^{n+1}\\ i_0+\cdots+i_n=\delta\end{subarray}}\{|a^v_{i_0,\ldots,i_n}|_v\}\nonumber\end{eqnarray}
par la d\'efinition directement. Donc on a le r\'esultat suivant imm\'ediatement.
\begin{lemm}\label{compare h and h_2}
  Avec toutes les notations dans la d\'efinition \ref{adelic height} et la d\'efinition \ref{adelic infinite height}. Soit $F\in \mathbb A_K[T_0,\ldots,T_n]$. Alors on a
  \[0\leqslant h_2(F)-h(F)\leqslant \frac{1}{2}\log {n+\delta\choose n}\]
  et
  \[0\leqslant h_{\infty,2}(F)-h_\infty(F)\leqslant \frac{1}{2}\log {n+\delta\choose n}.\]
\end{lemm}

\section{Une estimation de l'annulation de r\'esultant par r\'eductions}
Dans cette section, pour un polyn\^ome sur un corps de nombres $K$, on donnera une majoration de l'annulation d'un de ses r\'esultants par r\'eductions. Cette majoration d\'epend la hauteur, le degr\'e et le nombre des variables de ce polyn\^ome.
\subsection{R\'esultant sur un anneau}\label{f of variable T_n}
Soient $A$ un anneau, et
   \[f(T_0,\ldots,T_n)=\sum_{\begin{subarray}{x}(i_0,\ldots,i_n)\in\mathbb N^{n+1}\\ i_0+\cdots+i_n=\delta\end{subarray}}a_{i_0,\ldots,i_n}T_0^{i_0}\cdots T_n^{i_n}\in A[T_0,\ldots,T_n]\]
   un polyn\^ome homog\`ene de degr\'e $\delta$. On d\'esigne par $d_i=\deg_{T_i}(f)$ pour $i=0,\ldots,n$, et on suppose $d_n>0$ sans perte de g\'en\'eralit\'e. On \'ecrit $f(T_0,\ldots,T_n)$ sous la forme de
   \begin{equation}\label{variable T_n 0}
   f(T_0,\ldots,T_n)=s_{d_n}(T_0,\ldots,T_{n-1})T_n^{d_n}+\cdots+s_0(T_0,\ldots,T_{n-1}),
   \end{equation}
   o\`u l'on a $s_{d_n}(T_0,\ldots,T_{n-1})\neq0$.

     Par \eqref{variable T_n 0}, on consid\`ere $f(T_0,\ldots,T_n)$ comme un polyn\^ome \`a coefficients dans l'anneau $A[T_0,\ldots,T_{n-1}]$ de degr\'e $d_n$, alors $\frac{\partial f}{\partial T_n}$ est de degr\'e $d_n-1$ \`a coefficients dans $A[T_0,\ldots,T_{n-1}]$ si $A$ est de caract\'erisque nulle. Par le sens ci-dessus, le \textit{r\'esultant} de $f(T_0,\ldots,T_n)$ et $\frac{\partial f}{\partial T_n}$ est
   \begin{eqnarray}\label{def res}
   & &\res_{d_n}\left(f,\frac{\partial f}{\partial T_n}\right)\\
   &=&s_{d_n}\det\left(
      \begin{array}{ccccccc}
        1 & s_{d_n-1} & \cdots & s_0 &   &   &   \\
          & s_{d_n} & s_{d_n-1} & \cdots & s_0 &   &   \\
          &   & \ddots & \ddots & \ddots & \ddots &  \\
          &   &   & s_{d_n} & s_{d_n-1} & \cdots & s_0 \\
        d_n & (d_n-1)s_{d_n-1} & \cdots & s_1 &   &   &   \\
          & d_n s_{d} & (d-1)s_{d-1} & \cdots & s_1 &   &   \\
          &   & \ddots & \ddots & \ddots & \ddots &  \\
          &   &   & d_n s_{d_n} & (d_n-1)s_{d_n-1} & \cdots & s_1 \\
      \end{array}
    \right).\nonumber\end{eqnarray}

    On d\'esigne
    \begin{eqnarray}\label{def of res'}
      & &\res'_{d_n}\left(f,\frac{\partial f}{\partial T_n}\right)\\
      &=&\det\left(
      \begin{array}{ccccccc}
        1 & s_{d_n-1} & \cdots & s_0 &   &   &   \\
          & s_{d_n} & s_{d_n-1} & \cdots & s_0 &   &   \\
          &   & \ddots & \ddots & \ddots & \ddots &  \\
          &   &   & s_{d_n} & s_{d_n-1} & \cdots & s_0 \\
        d_n & (d_n-1)s_{d_n-1} & \cdots & s_1 &   &   &   \\
          & d_n s_{d_n} & (d_n-1)s_{d_n-1} & \cdots & s_1 &   &   \\
          &   & \ddots & \ddots & \ddots & \ddots &  \\
          &   &   & d_n s_{d_n} & (d_n-1)s_{d_n-1} & \cdots & s_1 \\
      \end{array}
    \right)\nonumber
    \end{eqnarray}
    pour simplifier. Par la construction ci-dessus, on a $\res'_{d_n}\left(f,\frac{\partial f}{\partial T_n}\right)\in A[T_0,\ldots,T_{n-1}]$.
\subsection{Polyn\^ome ad\'eliquement primitif}\label{adelicly primitive}
   Maintenant on travaille sur un corp de nombre $K$ et son anneau des entiers $\O_K$. Soit $f\in K[T_0,\ldots,T_n]$ un polyn\^ome homog\`ene non-nul de degr\'e $\delta$. On maintient toutes les notations m\^eme que celles dans \S \ref{f of variable T_n} lorsque $A$ est $K$, $\mathbb A_K$ ou $\mathbb A_{\O_K}$.

   On consid\`ere $f(T_0,\ldots,T_n)$ comme un polyn\^ome \`a coefficients dans $\mathbb A_K$ par rapport au plongement diagonal $\Delta:K\hookrightarrow \mathbb A_K$. Par le lemme \ref{denominator adelic}, il existe un \'el\'ement $c=(c_v)_{v\in M_K}\in \mathbb A_K$ avec $|c|_{\mathbb A_K}=1$ (voir \eqref{adelic norm} pour la d\'efinition de $|\ndot|_{\mathbb A_K}$), tel que pour toute place $v\in M_{K,f}$, on ait
   \[\max_{\begin{subarray}{x}(i_0,\ldots,i_n)\in\mathbb N^{n+1}\\ i_0+\cdots+i_n=\delta\end{subarray}}\left\{|c\Delta(a_{i_0,\ldots,i_n})|_v\right\}=1.\]
   On d\'esigne par $b_{i_0,\ldots,i_n}=(b_{i_0,\ldots,i_n}^v)_v=c\Delta(a_{i_0,\ldots,i_n})$ pour simplifier, et on dit que
     \begin{equation}\label{variable T_n adelic}F(T_0,\ldots,T_n)=\sum_{\begin{subarray}{x}(i_0,\ldots,i_n)\in\mathbb N^{n+1}\\ i_0+\cdots+i_n=\delta\end{subarray}}b_{i_0,\ldots,i_n}T_0^{i_0}\cdots T_n^{i_n}\end{equation}
est le \textit{polyn\^ome ad\'eliquement primitif} de $f(T_0,\ldots,T_n)$. En effet, on a $F(T_0,\ldots,T_n)\in \mathbb A_{\O_K}[T_0,\ldots,T_n]$, et sa composante en toute place finie est un polyn\^ome primitif au sens habituel. Dans ce cas-l\`a, $F$ est de degr\'e $d_n$ en la variable $T_n$.

Pour un polyn\^ome ad\'eliquement primitif $F$, on a
\begin{equation}\label{h_infty=h}
  h(F)=h_\infty(F),\hbox{ et }h_2(F)=h_{\infty,2}(F)
\end{equation}
o\`u $h(F)$ et $h_2(F)$ sont d\'efinis dans la d\'efinition \ref{adelic height}, et $h_\infty(F)$ et $h_{\infty,2}(f)$ sont d\'efinis dans la d\'efinition \ref{adelic infinite height}.
\subsection{L'estimation d'une hauteur du r\'esultant}
    Soit
    \[G(T_0,\ldots,T_{n-1})=\sum_{\begin{subarray}{x}(i_0,\ldots,i_{n-1})\in\mathbb N^{n}\end{subarray}}b_{i_0,\ldots,i_{n-1}}T_0^{i_0}\cdots T_{n-1}^{i_{n-1}}\]
    un polyn\^ome non-nul \`a coefficients dans $\mathbb A_K$, o\`u l'on d\'esigne $b_{i_0,\ldots,i_{n-1}}=(b^v_{i_0,\ldots,i_{n-1}})_{v\in M_K}$. Pour tout $\p\in\spm\O_K$ correspondant \`a la place $v\in M_{K,f}$, on d\'efinit sa norme donn\'ee par le mod\`ele comme
    \[\left\|G\right\|_{\p}=\|G^{(\p)}\|_\p=\max_{\begin{subarray}{x}(i_0,\ldots,i_{n-1})\in\mathbb N^{n}\end{subarray}}\left\{|b_{i_0,\ldots,i_{n-1}}^v|_v\right\},\]
    o\`u la $\p$-partie $G^{(\p)}$ est d\'efinie dans la d\'efinition \ref{p-part}. Si $G\in \mathbb A_{\O_K}[T_0,\ldots,T_{n-1}]$, on d\'esigne
    \begin{equation}\label{R(F)}
      \mathcal P(G)=\{\p\in\spm\O_K|\;G^{(\p)}\mod \p[T_0,\ldots,T_{n-1}]=0\}.
    \end{equation}

Pour une place infinie $v\in M_{K,\infty}$, on d\'esigne
\[\|G\|_{v}=\max_{\begin{subarray}{x}(i_0,\ldots,i_{n-1})\in\mathbb N^{n}\end{subarray}}\left\{\left|b^v_{i_0,i_1,\ldots,i_{n-1}}\right|_v\right\}\hbox{ et }\|G\|_{2,v}=\left(\sum_{\begin{subarray}{x}(i_0,\ldots,i_{n-1})\in\mathbb N^{n}\end{subarray}}\left|b^v_{i_0,i_1,\ldots,i_{n-1}}\right|_v^2\right)^{\frac{1}{2}},\]
qui sont les m\^emes que ceux dans \S \ref{section of adelic height}.

Soient $G_1,\ldots,G_m\in\mathbb A_K[T_0,\ldots,T_{n-1}]$. Pour toute place $v\in M_{K,\infty}$, on d\'efinit
\begin{equation}\label{norm of vector of polynomial}
  \|(G_1,\ldots,G_m)\|_{2,v}=\sqrt{\|G_1\|^2_{2,v}+\cdots+\|G_m\|^2_{2,v}}.
\end{equation}

Pour le polyn\^ome $F(T_0,\ldots,T_n)$ d\'efini dans \eqref{variable T_n adelic}, on d\'efinit $\res'_{d_n}\left(F,\frac{\partial F}{\partial T_n}\right)\in\mathbb A_{\O_K}[T_0,\ldots,T_{n-1}]$ comme \eqref{def of res'}. Afin d'estimer la taille de l'ensemble $\mathcal P\left(\res'_{d_n}\left(F,\frac{\partial F}{\partial T_n}\right)\right)$ d\'efini dans \eqref{R(F)}, on a le r\'esultat suivant.
\begin{prop}\label{r\'esultant2}
  Avec toutes les notations et constructions au-dessus. Soit $F(T_0,\ldots,T_n)$ d\'etermin\'e dans \eqref{variable T_n adelic} sur $\mathbb A_{\O_K}$ \`a partir de $f(T_0,\ldots,T_n)$ dans \eqref{variable T_n 0}. Pour tout $\p\in\spm\O_K$, on d\'esigne $N(\p)=\#(\O_K/\p)$. Alors on a
  \begin{eqnarray*}
    & &\frac{1}{[K:\Q]}\sum_{\p\in\mathcal P\left(\res'_{d_n}\left(F,\frac{\partial F}{\partial T_n}\right)\right)}\log N(\p)\\
    &\leqslant&(2d_n-2)h(f)+(d_n-1)\log{n+\delta\choose n}+\log(2d_n^{d_n}-d_n^{d_n-1}).
  \end{eqnarray*}
  o\`u $h(f)$ est d\'efini dans la d\'efinition \ref{classical height of hypersurface}.
\end{prop}
\begin{proof}
Si $d_n=\max\{d_0,\ldots,d_n\}=1$, on a $\res'_{d_n}\left(F,\frac{\partial F}{\partial T_n}\right)=1$ par d\'efinition directement. Donc l'ensemble $\mathcal P\left(\res'_{d_n}\left(F,\frac{\partial F}{\partial T_n}\right)\right)$ est vide, qui satisfait l'in\'egalit\'e dans l'\'enonc\'e.

 Dans le reste de la d\'emonstration, on suppose $d_n\geqslant2$. Comme $\res'_{d_n}\left(F,\frac{\partial F}{\partial T_n}\right)\in\mathbb A_{\O_K}[T_0,\ldots,T_{n-1}]$, alors pour tout $\p\in\spm\O_K$, on a
  \[\left\|\res'_{d_n}\left(F,\frac{\partial F}{\partial T_n}\right)^{(\p)}\right\|_\p\leqslant1.\]
  Donc on a
  \begin{eqnarray*}
    & &\frac{1}{[K:\Q]}\sum_{\p\in\mathcal P\left(\res'_{d_n}\left(F,\frac{\partial F}{\partial T_n}\right)\right)}\log N(\p)\\
    &\leqslant&-\sum_{\p\in\mathcal P\left(\res'_{d_n}\left(F,\frac{\partial F}{\partial T_n}\right)\right)}\frac{[K_\p:\Q_\p]}{[K:\Q]}\log \left\|\res'_{d_n}\left(F,\frac{\partial F}{\partial T_n}\right)^{(\p)}\right\|_\p.
  \end{eqnarray*}
  De plus, l'in\'egalit\'e
  \[\left\|\res'_{d_n}\left(F,\frac{\partial F}{\partial T_n}\right)^{(\p)}\right\|_\p<1\]
  est v\'erifi\'ee si et seulement si $\p\in \mathcal P\left(\res'_{d_n}\left(F,\frac{\partial F}{\partial T_n}\right)\right)$. Alors on obtient
  \begin{eqnarray*}
    & &-\sum_{\p\in\mathcal P\left(\res'_{d_n}\left(F,\frac{\partial F}{\partial T_n}\right)\right)}\frac{[K_\p:\Q_\p]}{[K:\Q]}\log \left\|\res'_{d_n}\left(F,\frac{\partial F}{\partial T_n}\right)^{(\p)}\right\|_\p\\
    &=&-\sum_{\p\in\spm\O_K}\frac{[K_\p:\Q_\p]}{[K:\Q]}\log \left\|\res'_{d_n}\left(F,\frac{\partial F}{\partial T_n}\right)^{(\p)}\right\|_\p\\
    &=&-h\left(\res'_{d_n}\left(F,\frac{\partial F}{\partial T_n}\right)\right)+\sum_{v\in M_{K,\infty}}\frac{[K_v:\Q_v]}{[K:\Q]}\log \left\|\res'_{d_n}\left(F,\frac{\partial F}{\partial T_n}\right)^{(v)}\right\|_v.
  \end{eqnarray*}
En effet, le polyn\^ome $\res'_{d_n}\left(F,\frac{\partial F}{\partial T_n}\right)\in\mathbb A_K[T_0,\ldots,T_{n-1}]$ est obtenu par la multiplication d'un \'el\'ement $c\in\mathbb A_K$ \`a un polyn\^ome dans $K[T_0,\ldots,T_{n-1}]$ satisfaisant $|c|_{\mathbb A_K}=1$. Alors d'apr\`es la remarque \ref{compare adelic and classic height} et la d\'efinition \ref{classical height of hypersurface}, on a $h\left(\res'_{d_n}\left(F,\frac{\partial F}{\partial T_n}\right)\right)\geqslant0$. On le combine avec \eqref{comapre coefficients of h_1 and h_2}, et on a
  \begin{eqnarray*}& &-\sum_{\p\in\mathcal P\left(\res'_{d_n}\left(F,\frac{\partial F}{\partial T_n}\right)\right)}\frac{[K_\p:\Q_\p]}{[K:\Q]}\log \left\|\res'_{d_n}\left(F,\frac{\partial F}{\partial T_n}\right)^{(\p)}\right\|_\p\\
  &\leqslant&\sum_{v\in M_{K,\infty}}\frac{[K_v:\Q_v]}{[K:\Q]}\log \left\|\res'_{d_n}\left(F,\frac{\partial F}{\partial T_n}\right)^{(v)}\right\|_{2,v}.\end{eqnarray*}
  d'apr\`es l'\'egalit\'e ci-dessus.

  Afin d'estimer $\left\|\res'_{d_n}\left(F,\frac{\partial F}{\partial T_n}\right)^{(v)}\right\|_{2,v}$ pour une place $v\in M_{K,\infty}$, on renvoie que c'est la $v$-partie de $\res'_{d_n}\left(F,\frac{\partial F}{\partial T_n}\right)$ d\'efinie dans la matrice de \eqref{def of res'} sur l'anneau $\mathbb A_{\O_K}$. Soit $w^v_i$ le $i$-i\`eme vecteur de ligne dans la $v$-partie de la matrice supprimant la premi\`ere colonne dans \eqref{def of res'}, o\`u $i=1,\ldots,2d_n-1$. Alors on \'ecrit la matrice sous la forme de \begin{equation*}
  \res'_{d_n}\left(F,\frac{\partial F}{\partial T_n}\right)^{(v)}=\det\left(
                                                                      \begin{array}{cc}
                                                                        1 & w^v_1 \\
                                                                          & \vdots \\
                                                                          & w^v_{d_n-1} \\
                                                                        d_n & w^v_{d_n} \\
                                                                          & \vdots \\
                                                                          & w^v_{2d_n-1} \\
                                                                      \end{array}
                                                                    \right).
  \end{equation*}

  D'abord on d\'eveloppe la matrice ci-dessus par rapport \`a la permi\`ere colonne, qui a deux \'el\'ements non-nuls seulement. Soient $M_v$ et $N_v$ les cofacteurs de $1$ et $d_n$ respectivement dans cette matrice. Alors on a
  \[M_v=\left(
          \begin{array}{c}
            w^v_2 \\
            \vdots \\
            w^v_{2d_n-1} \\
          \end{array}
        \right)
   \hbox{ et }N_v=\left(
          \begin{array}{c}
            w^v_1 \\
            \vdots \\
            w^v_{d_n-1} \\
            w^v_{d_n+1}\\
            \vdots\\
            w^v_{2d_n-1}\\
          \end{array}
        \right).\]
  Par d\'efinition, on a
  \[\left\|\res'_{d_n}\left(F,\frac{\partial F}{\partial T_n}\right)^{(v)}\right\|_{2,v}\leqslant \|\det(M_v)\|_{2,v}+d_n\|\det(N_v)\|_{2,v}.\]

   En raison du fait que la norme $\|\ndot\|_{2,v}$ est hermitienne sur $K_v[T_0,\ldots,T_n]$, d'apr\`es \cite[Corollaire 1, \S3.5 Chap. V]{Bourbaki81}, on a
  \[\|\det(M_v)\|_{2,v}\leqslant\prod_{i=2}^{2d_n-1}\|w^v_i\|_{2,v}\hbox{ et }\|\det(N_v)\|_{2,v}\leqslant \frac{1}{\|w^v_{d_n}\|_{2,v}}\prod_{i=1}^{2d_n-1}\|w^v_i\|_{2,v},\]
  o\`u chaque $\|w_i^v\|_{2,v}$ est d\'efini dans \eqref{norm of vector of polynomial} pour $i=1,\ldots,2d_n-1$.

  Par d\'efinition, on a $\|w^v_i\|_{2,v}\leqslant \|F\|_{2,v}$ pour les $i=1,\ldots,d_n-1$, $\|w^v_{d_n}\|_{2,v}\leqslant(d_n-1)\|F\|_{2,v}$, et $\|w^v_i\|_{2,v}\leqslant d_n\|F\|_{2,v}$ pour les $i=d_n+1,\ldots,2d_n-1$. Alors on obtient
  \[\|\det(M_v)\|_{2,v}\leqslant(d_n-1)d_n^{d_n-1}\|F\|_{2,v}^{2d_n-2}\hbox{ et }d_n\|\det(N_v)\|_{2,v}\leqslant d_n^{d_n}\|F\|_{2,v}^{2d_n-2}.\]

  On combine les estimations ci-dessus, et on obtient
  \begin{eqnarray*}
    & &\sum_{v\in M_{K,\infty}}\frac{[K_v:\Q_v]}{[K:\Q]}\log \left\|\res'_{d_n}\left(F,\frac{\partial F}{\partial T_n}\right)^{(v)}\right\|_{2,v}\\
    &\leqslant&(2d_n-2)\sum_{v\in M_{K,\infty}}\frac{[K_v:\Q_v]}{[K:\Q]}\log\|F\|_{2,v}+\log(2d_n^{d_n}-d_n^{d_n-1}).
    \end{eqnarray*}
    Comme $F$ est le polyn\^ome ad\'eliquement primitif de $f$, on a
     \[\sum_{v\in M_{K,\infty}}\frac{[K_v:\Q_v]}{[K:\Q]}\log\|F\|_{2,v}=h_2(F)\]
    par \eqref{h_infty=h}. D'apr\`es la remarque \ref{compare adelic and classic height} et la proposition \ref{compare h and h_2}, on a
    \[h_2(F)\leqslant h(F)+\frac{1}{2}\log{n+\delta\choose n}=h(f)+\frac{1}{2}\log{n+\delta\choose n},\]
    o\`u et $h(f)$ est d\'efini dans la d\'efinition \ref{classical height of hypersurface}. Donc on obtient le r\'esultat.
\end{proof}

\section{Contr\^ole des fibres non r\'eduites d'une hypersurface projective}
Soient $\overline{\sE}$ un fibr\'e vectoriel hermitien de rang $n+1$ sur $\spec\O_K$, $X$ un sous-sch\'ema ferm\'e de $\mathbb P(\sE_K)$, et $\mathscr X$ l'adh\'erence sch\'ematique de $X$ dans $\mathbb P(\sE)$. Par \cite[Th\'eor\`eme (9.7.7)]{EGAIV_3}, si $X$ est r\'eduit, alors il n'a y qu'un nombre fini d'ideaux maximaux $\p\in\spm\O_K$ telles que le fibre $\mathscr X_{\f_\p}=\mathscr X\times_{\spec\O_K}\spec \f_\p$ ne soit pas r\'eduite, o\`u $\f_\p$ est le corps r\'esiduel de $\O_K$ en $\p$.

Dans cette section, on donnera une description num\'erique des r\'eduitions non r\'eduites lorsque $X$ est une hypersurface dans $\mathbb P(\sE_K)$. Plus pr\'ecisement, on donnera une majoration du produit des normes des id\'eaux maximaux $\p\in\spm\O_K$ tels que $\mathscr X_{\f_\p}=\mathscr X\times_{\spec\O_K}\spec\f_\p$ ne soit pas r\'eduit.
\subsection{R\'esultats pr\'eliminaires}
 Pour le crit\`ere du r\'eduisant d'hypersurface et le choix du plongement dans $\mathbb P(\sE_K)$, il faut des r\'esultats auxiliaires suivants.

\subsubsection*{Crit\`ere du r\'eduisant d'une hypersurface}
Pour introduire une m\'ethode de crit\`ere de r\'eduisant d'une hypersurface projective, d'abord on r\'ef\'erence le r\'esultat suivant, qui est une crit\`ere du r\'eduisant de l'hypersurface affine.
\begin{lemm}[\cite{LiuQing}, Exercise 2.4.1]\label{affine hypersurface}
  Soient $k$ un corps, et $P\in k[T_1,\ldots,T_n]$ un polyn\^ome non-nul. Alors le sch\'ema $\spec\left(k[T_1,\ldots,T_n]/(P)\right)$ est r\'eduit (\resp irr\'eductible; \resp int\`egre) si et seulement si $P$ n'a pas de facteur carr\'e (\resp est une puissance d'un polyn\^ome irr\'eductible, \resp est irr\'eductible).
\end{lemm}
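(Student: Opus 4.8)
The plan is to reduce everything to unique factorization in $k[T_1,\ldots,T_n]$. I would begin by writing $P=c\,Q_1^{e_1}\cdots Q_r^{e_r}$ with $c\in k^\times$, the $Q_i$ pairwise non-associate irreducible polynomials and $e_i\geqslant 1$; this is legitimate since $k[T_1,\ldots,T_n]$ is a Noetherian unique factorization domain (the degenerate case where $P$ is a nonzero constant, so $r=0$ and $\spec(k[T_1,\ldots,T_n]/(P))=\emptyset$, being checked directly). Put $R=k[T_1,\ldots,T_n]/(P)$ and $\mathfrak r=\sqrt{(P)}$, so that the nilradical of $R$ equals $\mathfrak r/(P)$; I then recall the standard dictionary: $R$ is reduced if and only if $\mathfrak r=(P)$; the topological space $\spec R$ is irreducible if and only if $\mathfrak r$ is prime, equivalently if and only if $R$ has a unique minimal prime ideal; and $R$ is a domain if and only if $(P)$ is prime.

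The key step is the computation $\mathfrak r=(Q_1\cdots Q_r)$. I would first observe that in a unique factorization domain $(P)=(Q_1^{e_1})\cap\cdots\cap(Q_r^{e_r})$, because an element is divisible by $P$ precisely when it is divisible by each $Q_i^{e_i}$, the factors $Q_i$ being pairwise coprime. Taking radicals and using $\sqrt{(Q_i^{e_i})}=(Q_i)$ (since $Q_i$ is prime) gives $\mathfrak r=(Q_1)\cap\cdots\cap(Q_r)=(Q_1\cdots Q_r)$, the last equality again by coprimality. One may alternatively view $(P)=\bigcap_i(Q_i^{e_i})$ as a primary decomposition, each $(Q_i^{e_i})$ being $(Q_i)$-primary, but the direct divisibility argument is shorter.

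With this at hand the three equivalences become bookkeeping on the exponents. Reducedness: $\mathfrak r=(P)$ if and only if $Q_1\cdots Q_r$ and $P$ are associates, i.e. if and only if $e_1=\cdots=e_r=1$, which is exactly the condition that $P$ have no square factor. Irreducibility: $(Q_1\cdots Q_r)$ is prime if and only if $r=1$, i.e. if and only if $P=c\,Q_1^{e_1}$ is a power of an irreducible polynomial. Integrality: $(P)$ is prime if and only if $P$ is irreducible, as $(P)$ is a nonzero principal ideal in a unique factorization domain; this is consistent with the fact that a scheme is integral if and only if it is both reduced and irreducible, since having no square factor together with being a power of an irreducible forces $r=1$ and all exponents equal to $1$.

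The only genuine obstacle should be the radical computation $\mathfrak r=(Q_1\cdots Q_r)$; once it is in place, the three assertions are merely the definitions of a reduced, an irreducible, and an integral scheme, combined with elementary manipulation of the factorization of $P$.
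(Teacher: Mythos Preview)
Your argument is correct and is the standard commutative-algebra proof of this fact. The paper does not supply its own proof of this lemma: it is merely stated with a reference to \cite[Exercise 2.4.1]{LiuQing}, so there is no proof in the paper to compare against.
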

\begin{rema}\label{projective hypersurface lost its reduceness}
  D'apr\`es le lemme \ref{affine hypersurface}, soient $k$ un corps, et $P\in k[T_0,\ldots,T_n]$ un polyn\^ome homog\`ene non-nul. Alors $X=\proj\left(k[T_0,\ldots,T_n]/(P)\right)$ est r\'eduit si et seulement si $P$ n'a pas de facteur carr\'e.
\end{rema}
\subsubsection*{Changement de coordonn\'ee}
Soit $\mathscr X$ un $\O_K$-sch\'ema r\'eduit, on d\'efinit l'ensemble
\begin{equation}\label{Q(W)}
\mathcal Q(\mathscr X)=\{\p\in\spm\O_K|\;\mathscr X\times_{\spec \O_K}\spec \f_\p\hbox{ ne soit pas r\'eduite}\}.
\end{equation}
Afin d'appliquer la m\'ethode de r\'esultant pour l'estimation des fibres non r\'eduites d'un sch\'ema arithm\'etique, il faut choisir une coordonn\'ee particuli\`ere. Les deux lemmes suivants sont utiles pour ce but.

Le lemme premier est d\'eduit d'apr\`es \cite[Proposition(4.4.5), Chap. I]{NouveauEGA1} directement.
\begin{lemm}\label{non-reduced invariant}
  Soient $\sE$ un fibr\'e vectoriel hermitien sur $\spec\O_K$, $\mathscr X$ un sous-sch\'ema ferm\'e r\'eduit de $\mathbb P(\sE)$, et $\sigma\in\Aut_{\O_K}(\mathbb P(\sE))$. Alors on a $\mathcal Q(\mathscr X)=\mathcal Q(\sigma(\mathscr X))$, o\`u $\mathcal Q(\ndot)$ est d\'efini dans \eqref{Q(W)}.
\end{lemm}

Le lemme suivant est une analogie de \cite[Lemma 2.1, Lemma 2.2]{Fukshansky2006}.
\begin{lemm}\label{change of coordinate}
  Soit $F(T_0,\ldots,T_n)$ le polyn\^ome homog\`ene ad\'eliquement primitif determin\'e \`a \eqref{variable T_n adelic}. Alors il existe un $(a_0,\ldots,a_{n-1})\in\mathbb Z^n$ avec $\max\limits_{0\leqslant i\leqslant n-1}\{|a_i|\}\leqslant\frac{\delta+1}{2}$ qui induit un $\sigma\in\Aut_{\O_K}(\mathbb P(\sE))$, tel que le coefficient de $T_n^{\delta}$ dans $F(T_0+a_0T_n,\ldots,T_{n-1}+a_{n-1}T_{n-1},T_n)$ ne soit pas z\'ero, o\`u l'on prolonge $\mathbb Z$ dans $\mathbb A_{\O_K}$ canoniquement. De plus, on a
  \[h(\sigma(\mathscr X))\leqslant h(X)+\log\left({n+\delta\choose\delta}{\delta\choose\left[\frac{\delta+1}{2}\right]}\left(\frac{\delta+1}{2}\right)^\delta\right).\]
\end{lemm}
\begin{proof}
  En effet, le coefficient de $T_0^{s_0}\cdots T_n^{s_n}$ dans le polyn\^ome
  \begin{eqnarray*}
    & &F(T_0+a_0T_n,\ldots,T_{n-1}+a_{n-1}T_{n-1},T_n)\\
    &=&\sum_{\begin{subarray}{x}(j_0,\ldots,j_n)\in\mathbb N^{n+1}\\ j_0+\cdots+j_n=\delta\end{subarray}}b_{j_0,\ldots,j_n}(T_0+a_0T_n)^{j_0}\cdots(T_{n-1}+a_{n-1}T_n)^{j_{n-1}}T_n^{j_n}
  \end{eqnarray*}
   est
  \begin{equation}\label{coefficient after automorphism}
    \sum_{\begin{subarray}{x}(j_0,\ldots,j_n)\in\mathbb N^{n+1}\\ j_0+\cdots+j_n=\delta\end{subarray}}b_{j_0,\ldots,j_n}{j_0\choose s_0}\cdots{j_{n-1}\choose s_{n-1}}a_0^{j_0-s_0}\cdots a_{n-1}^{j_{n-1}-s_{n-1}}
  \end{equation}
  par un calcul \'el\'ementaire. En particulier, le coefficient de $T_n^\delta$ dans $F(T_0+a_0T_n,\ldots,T_{n-1}+a_{n-1}T_{n-1},T_n)$ est
  $$\sum_{\begin{subarray}{x}(i_0,\ldots,i_n)\in\mathbb N^{n+1}\\ i_0+\cdots+i_n=\delta\end{subarray}}b_{i_0,\ldots,i_n}a_0^{i_0}\cdots a_{n-1}^{i_{n-1}},$$
  qui est un polyn\^ome non-nul de degr\'e au plus $\delta$ de variables $a_0,\ldots,a_{n-1}$. De plus, l'ensemble $\left\{-\left[\frac{\delta+1}{2}\right],\ldots,-1,0,1,\ldots,\left[\frac{\delta+1}{2}\right]\right\}^n$ est de cardinal plus grand que ou \'egal \`a $(\delta+1)^n$. Par \cite[Lemma 1, la page 261]{Cassels1959}, il existe un $(a_0,\ldots,a_{n-1})\in \left\{-\left[\frac{\delta+1}{2}\right],\ldots,-1,0,1,\ldots,\left[\frac{\delta+1}{2}\right]\right\}^n$ qui satisfait le besoin.

  Pour l'estimation de la hauteur, $F(T_0,\ldots,T_n)$ et $F(T_0+a_0T_n,\ldots,T_{n-1}+a_{n-1}T_{n-1},T_n)$ sont ad\'eliquement primitifs. De plus, il y a au plus ${n+\delta\choose \delta}$ termes dans la somme \eqref{coefficient after automorphism}. En suite, une majoration de ${j_0\choose s_0}\cdots{j_{n-1}\choose s_{n-1}}$ dans \eqref{coefficient after automorphism} est ${\delta\choose\left[\frac{\delta+1}{2}\right]}$, et une majoration de $a_0^{j_0-s_0}\cdots a_{n-1}^{j_{n-1}-s_{n-1}}$ dans \eqref{coefficient after automorphism} est $\left(\frac{\delta+1}{2}\right)^\delta$. Donc on a la majoration de la hauteur de $h(\sigma(\mathscr X))$ dans l'\'enonc\'e.
\end{proof}
\subsection{Description num\'erique des fibres non r\'eduites}
Soit $\overline{\mathcal E}$ le fibr\'e vectoriel sur $\spec\O_K$ d\'efini dans \eqref{sE with l^2}. Dans la suite, on d\'esigne par $\mathbb P^n_K=\mathbb P(\sE_K)$ et $\mathbb P^n_{\O_K}=\mathbb P(\sE)$ pour simplifier. Soit $X\hookrightarrow\mathbb P^n_K$ l'hypersurface d\'efinie par un polyn\^ome homog\`ene
\begin{equation*}f(T_0,\ldots,T_n)=\sum_{\begin{subarray}{x}(i_0,\ldots,i_n)\in\mathbb N^{n+1}\\ i_0+\cdots+i_n=\delta\end{subarray}}a_{i_0,\ldots, i_n}T_0^{i_0}\cdots T_n^{i_n}\in K[T_0,\ldots,T_n]\end{equation*}
de degr\'e $\delta$, et $\mathscr X$ l'adh\'erence sch\'ematique de $X$ dans $\mathbb P^n_{\O_K}$. Soit $F\in\mathbb A_{\O_K}[T_0,\ldots,T_n]$ un polyn\^ome ad\'eliquement primitif associ\'e \`a $f$ comme construit dans \eqref{variable T_n adelic} de \S \ref{adelicly primitive}. Pour chaque id\'eal maximal $\p\in\spm\O_K$, la r\'eduction de $\mathscr X$ modulo $\p$ se factorise par la localisation de $\O_K$ vers $\O_{K,\p}$. En effet, on a le diagramme cart\'esien
   \[\xymatrix{\mathscr X_{\f_\p}\ar[r]\ar@{^{(}->}[d] \ar@{}[rd]|{\Box}& \mathscr X_{\O_{K,\p}} \ar[r]\ar@{^{(}->}[d]\ar@{}[rd]|{\Box}& \mathscr X\ar@{^{(}->}[d]\\\mathbb P^n_{\f_\p}\ar[r]\ar[d] \ar@{}[rd]|{\Box}& \mathbb P^n_{\O_{K,\p}} \ar[r]\ar[d]\ar@{}[rd]|{\Box}& \mathbb P_{\O_K}^n\ar[d]\\ \spec \f_\p\ar[r]&\spec\O_{K,\p} \ar[r]&\spec\O_K.}\]
   Par d\'efinition, $\mathscr X_{\O_{K,\p}}\hookrightarrow\mathbb P^n_{\O_{K,\p}}$ est d\'efini par la $\p$-partie $F^{(\p)}(T_0,\ldots,T_n)$ (voir la d\'efinition \ref{p-part}) de $F(T_0,\ldots,T_n)$ au-dessus, qui est primitif sur $\O_{K,\p}$.

   Avec les r\'esultats ci-dessus, on va d\'emontrer le r\'esultat suivant.
\begin{theo}\label{default reduissant}
  Soient $X\hookrightarrow\mathbb P^n_K$ une hypersurface r\'eduite, et $\mathscr X$ l'adh\'erence sch\'ematique de $X$ dans $\mathbb P^n_{\O_K}$. Soit $N(\p)=\#(\O_K/\p)$, o\`u $\p\in \spm\O_K$. Avec les notations au-dessus, on a l'in\'egalit\'e
  \[\frac{1}{[K:\Q]}\sum_{\p\in \mathcal Q(\mathscr X)}\log N(\p)\leqslant(2\delta-1)h(X)+C_1(n,\delta),\]
o\`u la constante
\begin{eqnarray*}C_1(n,\delta)&=&(2\delta-1)\delta\log\left(\frac{\delta+1}{2}\right)+\log(2\delta^{\delta}-\delta^{\delta-1})+(3\delta-2)\log {n+\delta\choose n}\\
& &+(2\delta-1)\log{\delta\choose\left[\frac{\delta+1}{2}\right]},\end{eqnarray*}
la notation $\mathcal Q(\mathscr X)$ est dans \eqref{Q(W)}, et $h(X)$ est d\'efinie dans la d\'efinition \ref{classical height of hypersurface}.
\end{theo}
\begin{proof}
Si $\delta=1$, alors $X$ est un hyperplan dans $\mathbb P_K^n$. Dans ce cas-l\`a, on a
\[\frac{1}{[K:\Q]}\sum_{\p\in \mathcal Q(\mathscr X)}\log N(\p)=0\]
par la d\'efinition directement, qui satisfait l'assertion.

Dans la suite on suppose $\delta\geqslant2$. On choisit un \'el\'ement $\sigma \in \Aut_{\O_K}\left(\mathbb P^n_{\O_K}\right)=\mathrm{PGL}_{n}(\O_K)$, qui envoie la coordonn\'ee $T_i$ dans $T_i+a_iT_n$ pour les $i=0,\ldots,n-1$ o\`u $a_i\in\mathbb Z$, et envoie $T_n$ dans $T_n$. Par le lemme \ref{non-reduced invariant}, on a $\mathcal Q(\mathscr X)=\mathcal Q(\sigma(\mathscr X))$. De plus, d'apr\`es le lemme \ref{change of coordinate} et le calcul dans \eqref{coefficient after automorphism}, il existe un $\sigma\in\Aut_{\O_K}\left(\mathbb P^n_{\O_K}\right)$, tel que
\[h(\sigma(\mathscr X))\leqslant h(X)+\log\left({n+\delta\choose\delta}{\delta\choose\left[\frac{\delta+1}{2}\right]}\left(\frac{\delta+1}{2}\right)^\delta\right)\]
 et la fibre g\'en\'erique de $\sigma(\mathscr X)$ soit d\'efinie par un polyn\^ome homog\`ene dont le coefficient du terme $T_n^\delta$ ne soit pas z\'ero.

On suppose la fibre g\'en\'erique de $\sigma(\mathscr X)\hookrightarrow\mathbb P^n_{\O_K}$ est d\'efinie par le polyn\^ome homog\`ene $f(T_0,\ldots,T_n)$ \`a coefficients dans $K$, et on \'ecrit
\begin{equation}\label{variable T_n 2}
    f(T_0,\ldots,T_n)=t_{\delta}T_n^{\delta}+t_{\delta-1}(T_0,\ldots,T_{n-1})T_n^{\delta-1}+\cdots+t_{0}(T_0,\ldots,T_{n-1}),
  \end{equation}
o\`u $t_\delta\neq0$. D'apr\`es la forme \eqref{variable T_n 2}, on \'ecrit le polyn\^ome ad\'eliquement primitif $F(T_0,\ldots,T_n)\in\mathbb A_{\O_K}[T_0,\ldots,T_0]$ de $f$ obtenu dans \S \ref{adelicly primitive} sous la forme de
  \[F(T_0,\ldots,T_n)=t'_{\delta}T_n^{\delta}+t'_{\delta-1}(T_0,\ldots,T_{n-1})T_n^{\delta-1}+\cdots+t'_{0}(T_0,\ldots,T_{n-1}),\]
  o\`u tous les $t'_i$ sont obtenus par la multiplication de $t_i$ par $c\in\mathbb A_{K}$ dans \eqref{variable T_n 2} pour les $i=0,\ldots,\delta$.

On consid\`ere $F(T_0,\ldots,T_n)$ comme un polyn\^ome de variable $T_n$ sur l'anneau $\mathbb A_{\O_K}[T_0,\ldots,T_{n-1}]$ de degr\'e $\delta$. Comme $X$ est r\'eduit, alors $F(T_0,\ldots,T_n)$ n'a pas de facteur carr\'e d'apr\`es la remarque \ref{projective hypersurface lost its reduceness}. Donc pour tout $\p\in\spm\O_K$, on a $\res'_{\delta}\left(F,\frac{\partial F}{\partial T_n}\right)^{(\p)}\neq0$ (voir \eqref{def of res'} pour la d\'efinition de ce r\'esultant). Donc si $F^{(\p)}(T_0,\ldots,T_n)$ modulo $\p[T_0,\ldots,T_{n}]$ admet un facteur carr\'e \`a la variable $T_n$, le polyn\^ome $\res'_{\delta}\left(F,\frac{\partial F}{\partial T_n}\right)^{(\p)}$ modulo $\p[T_0,\ldots,T_{n-1}]$ s'annule.

Si $F(T_0,\ldots,T_n)$ modulo $\p[T_0,\ldots,T_n]$ a un facteur carr\'e sans variable $T_n$, on a $\p\in\mathcal P(t'_{\delta})=\{\p\in\spm\O_K|{t'}_{\delta}^{(\p)}\mod \p=0\}$. Donc on a
\[\mathcal Q(\sigma(\mathscr X))\subseteq\mathcal P(t'_\delta)\cup\mathcal P\left(\res'_{\delta}\left(F,\frac{\partial F}{\partial T_n}\right)\right),\]
voir \eqref{R(F)} pour la d\'efinition de $\mathcal P\left(\res'_{\delta}\left(F,\frac{\partial F}{\partial T_n}\right)\right)$.

Pour l'estimation de $\mathcal P(t'_\delta)$, on a
\begin{eqnarray*}
\frac{1}{[K:\Q]}\sum_{\p\in\mathcal P\left(t'_{\delta}\right)} \log N(\p)&\leqslant&-\sum_{\p\in\spm\O_K}\frac{[K_\p:\Q_\p]}{[K:\Q]}\log|t'_{\delta}|_\p\\
&=&-\sum_{v\in M_{K}}\frac{[K_v:\Q_v]}{[K:\Q]}\log|ct_{\delta}|_v+\sum_{v\in M_{K,\infty}}\frac{[K_v:\Q_v]}{[K:\Q]}\log|t'_{\delta}|_v\\
&\leqslant&h_\infty(F)= h(\sigma(F),
\end{eqnarray*}
o\`u la ligne derni\`ere ci-dessus est d'apr\`es \eqref{h_infty=h} car $F$ est ad\'eliquement primitif et $t'_\delta$ est un coefficient de $F$, voir la d\'efinition \ref{adelic infinite height} pour la d\'efinition de $h_{\infty}(F)$.

Par la relation ci-dessus, on a
\begin{eqnarray*}
  & &\frac{1}{[K:\Q]}\sum_{\p\in \mathcal Q(\sigma(\mathscr X))}\log N(\p)\\
  &\leqslant&\frac{1}{[K:\Q]}\sum_{\p\in\mathcal P\left(\res'_{\delta}\left(F,\frac{\partial F}{\partial T_n}\right)\right)} \log N(\p)+\frac{1}{[K:\Q]}\sum_{\p\in\mathcal P\left(t'_{\delta}\right)} \log N(\p)\\
&\leqslant&(2\delta-2)h(\sigma(\mathscr X))+\log (2\delta^{\delta}-\delta^{\delta-1})+(\delta-1)\log {\delta+n\choose n}+h(\sigma(\mathscr X))\\
&\leqslant&(2\delta-1)h(X)+\log (2\delta^{\delta}-\delta^{\delta-1})+(\delta-1)\log {\delta+n\choose n}\\
& &+(2\delta-1)\log\left({n+\delta\choose\delta}{\delta\choose\left[\frac{\delta+1}{2}\right]}\left(\frac{\delta+1}{2}\right)^\delta\right),
\end{eqnarray*}
o\`u la deuxi\`eme in\'egalit\'e ci-dessus est d\'eduite \`a partir de la proposition \ref{r\'esultant2}, et la derni\`ere in\'egalit\'e est du lemme \ref{change of coordinate}. Donc on a l'assertion.
\end{proof}
\section{Contr\^ole des fibres non r\'eduites d'un sch\'ema de dimension pure}
Dans cette section ,on fixe $\overline{\mathcal E}$ le fibr\'e vectoriel sur $\spec\O_K$ d\'efini dans \eqref{sE with l^2}, et on d\'esigne $\mathbb P^n_K=\mathbb P(\sE_K)$ et $\mathbb P^n_{\O_K}=\mathbb P(\sE)$ pour simplifier. Soit $X$ un sous-sch\'ema ferm\'e de dimension pure de $\mathbb P^n_K$, qui est de dimension $d$ et degr\'e $\delta$. On d\'esigne par $\mathscr X$ l'adh\'erence sch\'ematique de $X$ dans $\mathbb P^n_{\O_K}$. Dans cette section, on contr\^olera les r\'eductions non r\'eduites de $\mathscr X$ lorsque $X$ est r\'eduit.

D'apr\`es la proposition \ref{compatible with flat base change}, la formation du diviseur de Cayley de $\mathscr X\hookrightarrow\mathbb P^n_{\O_K}$ commute au changement de base de $\O_K$ vers un corps r\'esiduel, voir \S \ref{Cayley form over field} pour la formation sur un corps, et \S \ref{formation of Cayley form over dedekind ring} pour la formation sur $\spec\O_K$. Donc pour c\^ontroler les fibres non r\'eduites de $\mathscr X\rightarrow\spec\O_K$, d'apr\`es la proposition \ref{cayleyform}, on a besoin de consid\'erer les r\'eduction du diviseur de Cayley de $\mathscr X$ telles que les vari\'et\'es de Cayley correspondantes ne soient pas r\'eduites.

Par l'argument ci-dessus, on a le c\^ontrole des fibres non r\'eduites de $\mathscr X$ suivant.

\begin{theo}\label{reduced default}
  Soient $X$ un sous-sch\'ema ferm\'e r\'eduit de dimension pure de $\mathbb P^n_K$, qui est de dimension $d$ et de degr\'e $\delta$, et $\mathscr X$ l'adh\'erence sch\'ematique $X$ dans $\mathbb P^n_{\O_K}$. De plus, soient $N={n+1\choose d+1}-1$, $\mathcal H_N=1+\frac{1}{2}+\cdots+\frac{1}{N}$, et $\overline{\O(1)}$ muni des m\'etriques de Fubini-Study induites par le fibr\'e vectoriel $\overline{\sE}$ d\'efini \`a \eqref{sE with l^2}, et $h_{\overline{\O(1)}}(\mathscr X)$ est la hauteur arakelovienne de $X$ d\'efinie dans la d\'efinition \ref{arakelov height of projective variety}. Alors on a
\[\frac{1}{[K:\Q]}\sum_{\p\in\mathcal Q(\mathscr X)}\log N(\p)\leqslant(2\delta-1)h_{\overline{\O(1)}}(\mathscr X)+C_2(n,d,\delta),\]
o\`u $\mathcal Q(\mathscr X)$ est d\'efini dans \eqref{Q(W)}, et la constante
\begin{eqnarray*}
  C_2(n,d,\delta)&=&(3\delta-2)\log {N+\delta\choose N}+(2\delta-1)\delta\log\frac{\delta+1}{2}+(2\delta-1)\log{\delta\choose\left[\frac{\delta+1}{2}\right]}\\
  & &+\log(2\delta^\delta-\delta^{\delta-1})+(2\delta-1)\left(4\delta\log(N+1)+(N+1)\delta\log 2-\frac{1}{2}\delta\mathcal H_N\right).
\end{eqnarray*}
\end{theo}
\begin{proof}
Soit $\Psi_{\mathscr X}$ le diviseur de Cayley de $\mathscr X$ prolong\'e dans $\mathbb P^n_{\O_K}$. Car $\mathscr X\rightarrow\spec\O_K$ est plat, donc $\Psi_{\mathscr X}$ est plat aussi sur $\spec\O_K$ par la proposition \ref{flatness of cayley form}. Pour tout $\p\in\spm\O_K$, soit $\Psi_{\mathscr X,\f_\p}$ la vari\'et\'e de Cayley de $\mathscr X_{\f_\p}\hookrightarrow\mathbb P^n_{\f_\p}$ d\'efinie dans la d\'efinition \ref{definition_of_Cayley_vareity} \`a partir de $\mathscr X\hookrightarrow\mathbb P^n_{\O_K}$ par la r\'eduction \`a $\p$. Alors d'apr\`es la proposition \ref{compatible with flat base change}, on a
\[\Psi_{\mathscr X}\times_{\spec\O_K}\spec\f_\p=[\Psi_{\mathscr X,\f_\p}]\]
comme diviseurs sur $\mathbb P\left(\bigwedge^{d+1}\sE_{\f_\p}\right)$. Donc $\Psi_{\mathscr X}\times_{\spec\O_K}\spec\f_\p$ d\'etermine la m\^eme hypersurface que $\Psi_{\mathscr X,\f_\p}$ dans $\mathbb P\left(\bigwedge^{d+1}\sE_{\f_\p}\right)$.

On d\'esigne
  \[\mathcal Q(\Psi_{\mathscr X})=\{\p\in\spm\O_K|\;\Psi_{\mathscr X,\f_\p}\hbox{ ne soit pas r\'eduit}\}.\]
Alors d'apr\`es la proposition \ref{compatible with flat base change}, la proposition \ref{cayleyform} et la remarque \ref{projective hypersurface lost its reduceness}, l'id\'eal maximal $\p\in\mathcal Q(\Psi_{\mathscr X})$ si et seulement si le sch\'ema $\mathscr X\times_{\spec\O_K}\spec \f_\p$ n'est pas r\'eduit, d'o\`u l'on a
  \begin{equation}\label{pure dimensional->cayley form}
    \frac{1}{[K:\Q]}\sum_{\p\in\mathcal Q(\mathscr X)}\log N(\p)=\frac{1}{[K:\Q]}\sum_{\p\in\mathcal Q(\Psi_{\mathscr X})}\log N(\p),
  \end{equation}
voir les notations ci-dessus dans \eqref{Q(W)}.

  D'apr\`es la proposition \ref{compatible with flat base change} aussi, le cycle $\Psi_{\mathscr X}\times_{\spec\O_K}\spec K$ d\'etermine une hypersurface de $\mathbb P\left(\bigwedge^{d+1}\sE_{K}\right)$ de degr\'e $\delta$. On prend un \'el\'ement $\psi_{X}\in \sym_{K}^\delta\left(\bigwedge^{d+1}\sE_K\right)$ qui d\'efinit l'hypersurface ci-dessus. Par le th\'eor\`eme \ref{default reduissant}, on obtient
   \begin{eqnarray}\label{inequality cayley form}
  & &\frac{1}{[K:\Q]}\sum_{\p\in\mathcal Q(\Psi_{\mathscr X})}N(\p)\\
  &\leqslant&(2\delta-1)h(\psi_{X})+\log(2\delta^\delta-\delta^{\delta-1})+(3\delta-2)\log{N+\delta\choose N}\nonumber\\
  & &+(2\delta-1)\delta\log\frac{\delta+1}{2}+(2\delta-1)\log{\delta\choose\left[\frac{\delta+1}{2}\right]},\nonumber
  \end{eqnarray}
o\`u $h(\psi_{X})$ est la hauteur classique d\'efinie dans la d\'efinition \ref{classical height of hypersurface}. On compare $h(\psi_{X})$ et $h_{\overline{\O(1)}}(\mathscr X)$ dans la proposition \ref{comparing heights} pour $\overline{\sE}$ d\'efini \`a \eqref{sE with l^2}, et on obtient l'assertion en l'appliquant \`a \eqref{pure dimensional->cayley form} et \eqref{inequality cayley form}.
\end{proof}
\begin{rema}
On consid\`ere la constante $C_2(n,d,\delta)$ d\'efinie dans le th\'eor\`eme \ref{reduced default}. On a\[C_2(n,d,\delta)\ll_{d,n}\delta^2\log\delta.\]
\end{rema}

\backmatter

\bibliography{liu}
\bibliographystyle{smfplain}

\end{document}